\newtheorem{theorem}{Theorem}[section]
\newtheorem{lemma}[theorem]{Lemma}
\newtheorem{proposition}[theorem]{Proposition}
\newtheorem{corollary}[theorem]{Corollary}
\theoremstyle{definition}
\newtheorem{definition}[theorem]{Definition}
\newtheorem{ex}[theorem]{Example}
\newtheorem{remark}[theorem]{Remark}
\numberwithin{equation}{section}
\newskip\aline \newskip\halfaline
\def\skipaline{\vskip\aline}
\def\qedbox{$\rlap{$\sqcap$}\sqcup$}
\def\qed{\nobreak\hfill\penalty250 \hbox{}\nobreak\hfill\qedbox\skipaline}
\def\proofend{\eqno{\mbox{\qedbox}}}
\newcommand{\one}{\mathbbm{1}}
\newcommand{\ur}{\underline{\mathbb R}}
\newcommand\bR{{\mathbb R}}
\newcommand\bZ{{\mathbb Z}}
\DeclareMathOperator{\tr}{{\rm tr}}
 \DeclareMathOperator{\Hom}{Hom}
 \DeclareMathOperator{\End}{End}
\DeclareMathOperator{\spa}{span}
\DeclareMathOperator{\ev}{\mathbf{ev}}
\DeclareMathOperator{\pf}{\mathbf{Pf}}
\DeclareMathOperator{\ppf}{\mathbf{pf}}
\DeclareMathOperator{\cov}{\boldsymbol{cov}}
\DeclareMathOperator{\Skew}{Skew}
\DeclareMathOperator{\Jac}{Jac}
\newcommand{\be}{{\boldsymbol{e}}}
\newcommand{\bp}{{\boldsymbol{p}}}
\newcommand{\bu}{{\boldsymbol{u}}}
\newcommand{\bv}{{\boldsymbol{v}}}
\newcommand{\bw}{{\boldsymbol{w}}}
\newcommand{\bx}{{\boldsymbol{x}}}
\newcommand{\by}{{\boldsymbol{y}}}
\newcommand{\bsE}{{\boldsymbol{E}}}
\newcommand{\bsP}{\boldsymbol{P}}
\newcommand{\bsU}{{\boldsymbol{U}}}
\newcommand{\bsV}{{\boldsymbol{V}}}
\newcommand{\bgamma}{{\boldsymbol{\gamma}}}
\newcommand{\bGamma}{{\boldsymbol{\Gamma}}}
\newcommand{\bXi}{\boldsymbol{\Xi}}
\newcommand{\si}{{\sigma}}
\newcommand{\eps}{{\epsilon}}
\newcommand{\vfi}{{\varphi}}
\newcommand{\eA}{\EuScript{A}}
\newcommand{\eB}{\EuScript{B}}
\newcommand{\eH}{\EuScript H}
\newcommand{\eK}{\EuScript{K}}
\newcommand{\eO}{\EuScript{O}}
\newcommand{\eS}{\EuScript{S}}
\newcommand{\eT}{\EuScript{T}}
\newcommand{\ra}{\rightarrow}
\newcommand{\hra}{\hookrightarrow}
\newcommand{\Llra}{{\Longleftrightarrow}}
\newcommand{\lan}{\langle}
\newcommand{\ran}{\rangle}
\def\inpr{\mathbin{\hbox to 6pt{\vrule height0.4pt width5pt depth0pt \kern-.4pt \vrule height6pt width0.4pt depth0pt\hss}}}
\newcommand{\pa}{\partial}
\newcommand{\ori}{\boldsymbol{or}}
\newcommand{\ube}{\underline{\boldsymbol{e}}}
\newcommand{\ubf}{\underline{\boldsymbol{f}}}
\begin{document}

\title[Stochastic Gauss-Bonnet-Chern]{A stochastic Gauss-Bonnet-Chern formula} 

% This is the American J. of Math Version

%\date{Started  March 5, 2014. Completed  on August 20, 2014. Last modified on {\today}. }

\author{Liviu I. Nicolaescu}
%\thanks{This work was partially supported by the NSF grant, DMS-1005745.}

\address{Department of Mathematics, University of Notre Dame, Notre Dame, IN 46556-4618.}
\email{nicolaescu.1@nd.edu}
\urladdr{\url{http://www.nd.edu/~lnicolae/}}

\subjclass{Primary      35P20,  53C65, 58J35,  58J40, 58J50, 60D05}
\keywords{connections, curvature, Euler form, Gauss-Bonnet-Chern theorem,  currents, random sections, Gaussian measures, Kac-Rice formula}

\begin{abstract} We prove that  a Gaussian ensemble of smooth random sections of a real vector bundle $E$ over compact manifold $M$  canonically defines a metric  on $E$ together with a connection compatible  with it. Additionally, we prove a refined Gauss-Bonnet theorem  stating that if  the bundle $E$ and the manifold $M$ are oriented,  then the Euler form of the above connection   can be identified, as a  current, with the expectation of the random current defined by the  zero-locus of a   random section  in the above  Gaussian  ensemble. \end{abstract}

\maketitle

\tableofcontents

\section{Introduction}
\setcounter{equation}{0}

\subsection{Notation and terminology}   Suppose that $X$ is a smooth manifold. For any  vector space $V$,  we denote by $\underline{V}_X$ the trivial bundle $V\times X\to X$.

 We denote by   $|\Lambda_X|\to X$  the  line bundle of $1$-densities on $X$, \cite{GS, N1}, so  that we have a well defined integration  map
\[
\int_X : C_0^\infty(|\Lambda_X|)\to \bR,\;\; C^\infty(|\Lambda_X|)\ni \rho\mapsto \int_X\rho(dx) .
\]
Suppose that  $F$ is a smooth vector  bundle  over $X$. We have  two natural projections 
\[
\pi_x,\pi_y: X\times X\to X,\;\;\pi_x(x,y)=x,\;\;\pi_y(x,y)=y,\;\;\forall x,y\in X.
\]
We set $F\boxtimes F:= \pi_x^*F\otimes \pi_y^* F$, so that $F\boxtimes F$ is vector bundle over $X\times X$.

 Following \cite[Chap.VI,\S1]{GS},   we define  a  \emph{generalized section} of $F$ to be a     continuous linear functional on the space $C_0^\infty(F^*\otimes |\Lambda_X|)$ equipped with the natural locally convex topology.  We denote by $C^{-\infty}(F)$ the space of  generalized   sections of $F$.    We have a natural injection,  \cite[Chap.VI, \S1]{GS}  
 \[
 i:C^\infty(F)\hra C^{-\infty}(F).
 \]
 Recall that a  Borel probability   measure $\mu$ on $\bR$ is called  (centered) \emph{Gaussian} if  has the form
 \[
 \mu(dx)=\bgamma_v(dx):=\begin{cases}
 \frac{1}{\sqrt{2\pi v} }e^{-\frac{x^2}{2v}} dx, & v>0,\\
 \delta_0, & v=0.
 \end{cases}
 \]
 where $\delta_0$ denotes the Dirac measure concentrated at the origin.

\subsection{Gaussian ensembles of sections and correlators}    The concept of Gaussian smooth random section of a vector bundle is very similar to the better known   concept of Gaussian  random function.     Throughout this paper  we fix a smooth compact connected manifold $M$ of dimension $m$ and a smooth real vector bundle $E\to M$ of  rank $r$.    

The notion of random section  of a vector bundle  is  described    in great detail in \cite[Sec. 8]{Bax}. This description  relies on the concept  of abstract Wiener space  due to L. Gross, \cite{Gross}.   Since this concept  may be less familiar to the readers with a more geometric bias,  we  decided to include an alternative  approach, hopefully more palatable  to  geometers.   From this point of view, a  random smooth section of $E$ is identified with a  probability measure on the space of  generalized  sections $C^{-\infty}(E)$ supported on the subspace $C^\infty(E)$.    The construction of such measures relies on the  fundamental work of  R.A. Minlos \cite{Min},  Gelfand-Vilenkin \cite{GeVi2}, X. Fernique \cite{Fer} and  L. Schwartz \cite{Sch}.     We  describe below   the results relevant  to our  main investigation.

The space $C^\infty(E^*\otimes|\Lambda_M|)$ is a nuclear countable Hilbert space in the sense of \cite{GeVi2} and, as such,  its dual  $C^{-\infty}(E)$ satisfies several useful measure theoretic properties.    The next result  follows from \cite{Fer}.
\begin{proposition}
\begin{enumerate}

\item The $\si$-algebra of  weakly Borel subsets of $C^{-\infty}(E)$ is equal to the $\si$-algebra of strongly Borel subsets.  We will refer to this $\si$-algebra  as the \emph{Borel $\si$-algebra} of $C^{-\infty}(E)$.

\item Every Borel probability measure on $C^{-\infty}(E)$ is Radon.

\item Any Borel subset of $C^\infty(E)$  (with its natural topology) belongs to the Borel $\si$-algebra of $C^{-\infty}(E)$.
\end{enumerate}
\label{prop: borel}
\end{proposition}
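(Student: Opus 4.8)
The three assertions are descriptive/measure-theoretic consequences of the nuclear structure of $\Phi := C^\infty(E^*\otimes|\Lambda_M|)$, and the plan is to realize $\Phi$ as a Hilbert--Schmidt scale and read off the Borel structure of its dual. Concretely, I would equip $\Phi$ with an increasing sequence of Hilbertian norms $|\cdot|_0\le |\cdot|_1\le\cdots$ (for instance Sobolev norms attached to an auxiliary metric and connection) which generate its Fr\'echet topology and for which each inclusion of completions $\Phi_{n+1}\hookrightarrow\Phi_n$ is Hilbert--Schmidt; nuclearity of $\Phi$ guarantees that such a scale exists. Then $\Phi=\bigcap_n\Phi_n$ and, dually, $C^{-\infty}(E)=\Phi'=\bigcup_n\Phi_{-n}$, where $\Phi_{-n}=(\Phi_n)'$ are separable Hilbert spaces with continuous (indeed Hilbert--Schmidt) inclusions $\Phi_{-n}\hookrightarrow\Phi_{-(n+1)}$. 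The weak topology on $\Phi'$ is $\sigma(\Phi',\Phi)$, while the strong topology is the inductive-limit topology of the $\Phi_{-n}$.

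For part (i), since the strong topology is finer, the weak Borel $\sigma$-algebra is automatically contained in the strong one, so the only content is the reverse inclusion. Choose a countable set $\{\phi_j\}$ dense in $\Phi$; then the weak Borel $\sigma$-algebra is exactly the cylindrical $\sigma$-algebra generated by the evaluations $u\mapsto\langle u,\phi_j\rangle$. On each separable Hilbert space $\Phi_{-n}$ the norm-Borel and weak-Borel $\sigma$-algebras coincide, the norm being a countable supremum of weakly continuous functionals; moreover the trace on $\Phi_{-n}$ of the cylindrical $\sigma$-algebra is this common $\sigma$-algebra, because $\{\phi_j\}$ is dense in $\Phi_n$ and hence separating on $\Phi_{-n}$. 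Each closed ball $\{\,|u|_{-n}\le R\,\}=\{\,\sup_j|\langle u,\psi_j\rangle|\le R\,\}$, for $\{\psi_j\}$ dense in the unit ball of $\Phi_n$, is weakly Borel, so $\Phi_{-n}=\bigcup_R\{\,|u|_{-n}\le R\,\}$ is weakly Borel in $\Phi'$. Now if $U\subseteq\Phi'$ is strongly open, then $U\cap\Phi_{-n}$ is norm-open in $\Phi_{-n}$ by definition of the inductive-limit topology, hence weakly Borel in $\Phi'$, and therefore $U=\bigcup_n(U\cap\Phi_{-n})$ is weakly Borel. This gives the missing inclusion and the two $\sigma$-algebras agree.

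For parts (ii) and (iii) I would pass to descriptive set theory. Each $\Phi_{-n}$ is Polish, so as a continuous image it is a Souslin subset of $\Phi'$, and the countable union $\Phi'=\bigcup_n\Phi_{-n}$ exhibits $C^{-\infty}(E)$ as a Souslin (indeed Lusin) space in either topology. By L.~Schwartz's regularity theorem, every finite Borel measure on a Souslin space is Radon, which is precisely (ii). For (iii), recall that $C^\infty(E)$ is a separable Fr\'echet, hence Polish, space, and the tautological inclusion $i:C^\infty(E)\hookrightarrow C^{-\infty}(E)$ is continuous and injective. The Lusin--Souslin theorem asserts that an injective Borel map between Souslin spaces carries Borel sets to Borel sets; applied to $i$, it shows that $i(B)$ is Borel in $C^{-\infty}(E)$ for every Borel $B\subseteq C^\infty(E)$, which is exactly (iii).

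The main obstacle is not any single estimate but the careful matching of topologies across the scale: one must verify that the trace on each Hilbert rung $\Phi_{-n}$ of the global weak-$*$ structure coincides with the Hilbert-space Borel structure, and that the norm balls $\{\,|u|_{-n}\le R\,\}$ are weak-$*$ Borel. These are exactly the places where \emph{separability} and \emph{nuclearity} enter, the latter through the Hilbert--Schmidt linking maps that force the strong dual to be a regular inductive limit and render each $\Phi_{-n}$ a Souslin subspace. Once this scale is installed, parts (i)--(iii) reduce to the standard theory of Souslin spaces, so the remaining work is organizational rather than computational.
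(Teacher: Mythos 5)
Your proof is correct, but note what it is being compared against: the paper offers no argument for this proposition at all — it simply states that the result ``follows from \cite{Fer}'' (Fernique's 1967 paper on generalized processes). Your route — realizing $\Phi=C^\infty(E^*\otimes|\Lambda_M|)$ as a countably Hilbert (Sobolev) scale, writing the dual as the increasing union of separable Hilbert rungs $\Phi_{-n}$, checking that the rungs and their norm-Borel subsets are cylindrically (hence weakly) measurable, and then invoking the Souslin-space machinery (every finite Borel measure on a Souslin space is Radon; the Lusin--Souslin injection theorem for part (iii)) — is essentially the standard mechanism underlying Fernique's results, so in substance the approaches agree; what your write-up buys is a self-contained argument reduced to well-documented descriptive set theory, at the cost of length. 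Two technical remarks. First, nuclearity (the Hilbert--Schmidt linking maps) is never actually used in your argument: part (i) needs only that the rungs $\Phi_n$ are separable Hilbert spaces, that $\Phi'=\bigcup_n\Phi_{-n}$, and that each inclusion $\Phi_{-n}\hookrightarrow\Phi'_\beta$ is continuous — the last being elementary, since every bounded subset of $\Phi$ is $|\cdot|_n$-bounded, so the strong topology is coarser than the inductive-limit topology, which is all you need (you do not need the two to coincide, which is where the Schwartz/nuclear property would genuinely enter). Second, in part (i) the closed balls should be quantified over a countable dense set of centers in $\Phi_{-n}$, not only balls centered at the origin, so that they generate the full norm-Borel $\si$-algebra of the separable metric space $\Phi_{-n}$; with that adjustment your chain of inclusions (cylindrical $\subseteq$ weak $\subseteq$ strong $\subseteq$ cylindrical) closes and all three $\si$-algebras coincide, which also justifies, a posteriori, your opening claim that the weak Borel $\si$-algebra equals the cylindrical one.
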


Any  section $\vfi\in C^\infty(E^*\otimes |\Lambda_M|)$ defines a continuous linear map $L_\vfi: C^{-\infty}(E)\to\bR$. Following \cite{Bog, GeVi2} we define   a  \emph{centered Gaussian measure}\footnote{In the sequel, for simplicity, we will drop the attribute  \emph{centered} when referring  to the various Gaussian measures since we will be working exclusively  with such objects.} on $C^{-\infty}(E)$ to be  a Borel probability measure  $\bGamma$  such that, for any section  $\vfi\in C^\infty(E^*\otimes |\Lambda_M|)$  the pushforward  $(L_\vfi)_\#(\bGamma)$ is a centered Gaussian $\bgamma_\vfi$ measure on $\bR$. 

The  measure $\bGamma$  is completely determined by its \emph{covariance form} which is the symmetric, nonnegative definite  bilinear map
\[
\eK_{\bGamma}:  C^\infty(E^*\otimes |\Lambda_M|)\times C^\infty(E^*\otimes |\Lambda_M|)\to \bR
\]
given by
\[
\eK_\bGamma(\vfi,\psi)=\bsE_\bGamma\bigl(\,L_\vfi\cdot L_\psi\,\bigr),\;\;\forall\vfi,\psi\in C^\infty(E^*\otimes |\Lambda_M|).
\]
Above,  $\bsE_\bGamma$ denotes the expectation with respect to  the  probability measure $\bGamma$ and we interpreted  $L_\vfi, L_\psi$ as  random variables on the probability space $(C^{-\infty}(E),\Gamma)$, 

Results of Fernique \cite[Thm.II.2.3 + Thm.II.3.2]{Fer} imply  that $\eK_\Gamma$ is   separately continuous. According   to  Schwartz' kernel theorem \cite[Chap.I, \S3.5]{GeVi2} the covariance form can be  identified  with a  linear functional $C_\bGamma$ on the topological  vector space
\[
C^\infty\bigl(\, (E^*\otimes |\Lambda_M|)\boxtimes (E^*\otimes |\Lambda_M|)\,\bigr)=C^\infty\bigl(\, (E^*\boxtimes E^*)\otimes |\Lambda_{M\times M}|\,\bigr),
\]
i.e.,  $C_\bGamma\in C^{-\infty}(E\boxtimes E)$. We will refer to $C_\bGamma$ as the \emph{covariance kernel} of $\bGamma$. 

\begin{theorem}[Minlos, \cite{Min}]\label{th: minlos} Given a  generalized section $C\in C^{-\infty}(E\boxtimes E)$ such that   the associated bilinear form 
\[
\eK:  C^\infty(E^*\otimes |\Lambda_M|)\times C^\infty(E^*\otimes |\Lambda_M|)\to \bR
\]
is symmetric and nonnegative definite, there exists a  unique  Gaussian measure on $C^{-\infty}(E)$ with covariance kernel $C$.
\end{theorem}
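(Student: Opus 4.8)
The plan is to reduce the assertion to the abstract Bochner--Minlos theorem for nuclear spaces. Write $\Phi:=C^\infty(E^*\otimes|\Lambda_M|)$, a nuclear countable Hilbert space whose dual is $\Phi'=C^{-\infty}(E)$, and let $Q(\vfi):=\eK(\vfi,\vfi)$ be the nonnegative quadratic form determined by $C$ through the Schwartz kernel identification $\eK(\vfi,\psi)=\langle C,\vfi\boxtimes\psi\rangle$. The candidate measure is forced to be the one whose characteristic (Fourier) functional is
\[
\chi(\vfi):=\exp\Bigl(-\tfrac12 Q(\vfi)\Bigr),\qquad \vfi\in\Phi,
\]
since if $\bGamma$ is Gaussian with covariance $\eK$ then each $L_\vfi$ is a centered Gaussian random variable of variance $Q(\vfi)$, whence $\bsE_\bGamma\bigl[e^{iL_\vfi}\bigr]=\chi(\vfi)$.

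First I would verify the three hypotheses of Bochner--Minlos for $\chi$. Normalization $\chi(0)=1$ is immediate. Continuity of $\chi$ on $\Phi$ reduces to continuity of $Q$, which holds because $Q(\vfi)=\langle C,\vfi\boxtimes\vfi\rangle$ is the pairing of the continuous functional $C\in C^{-\infty}(E\boxtimes E)$ with the continuously varying external square $\vfi\boxtimes\vfi$. The essential point is positive-definiteness: for all finite families $\vfi_1,\dots,\vfi_n\in\Phi$ and all $z_1,\dots,z_n\in\bC$,
\[
\sum_{j,k=1}^n z_j\overline{z_k}\,\chi(\vfi_j-\vfi_k)\ge 0.
\]
This I would obtain directly from the hypothesis on $\eK$: the Gram matrix $\bigl(\eK(\vfi_j,\vfi_k)\bigr)_{1\le j,k\le n}$ is symmetric and nonnegative-definite, hence is the covariance of a centered Gaussian vector $(\xi_1,\dots,\xi_n)$ in $\bR^n$. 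Then $\chi(\vfi_j-\vfi_k)=\bsE\bigl[e^{i\xi_j}\overline{e^{i\xi_k}}\bigr]$, so the displayed sum equals $\bsE\bigl[\,\bigl|\sum_j z_j e^{i\xi_j}\bigr|^2\,\bigr]\ge 0$.

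With these three properties in place, the nuclearity of $\Phi$ allows me to invoke Minlos' theorem, which yields a unique Radon probability measure $\bGamma$ on $\Phi'=C^{-\infty}(E)$ satisfying $\bsE_\bGamma\bigl[e^{iL_\vfi}\bigr]=\chi(\vfi)$ for every $\vfi$. To identify $\bGamma$, fix $\vfi$ and note that the characteristic function of the pushforward $(L_\vfi)_\#\bGamma$ is $t\mapsto\chi(t\vfi)=\exp\bigl(-\tfrac{t^2}{2}Q(\vfi)\bigr)$, which is exactly the characteristic function of $\bgamma_{Q(\vfi)}$; hence $(L_\vfi)_\#\bGamma=\bgamma_{Q(\vfi)}$ and $\bGamma$ is a centered Gaussian measure. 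Polarizing $Q$ (equivalently, differentiating $\chi$ twice) recovers $\bsE_\bGamma\bigl[L_\vfi L_\psi\bigr]=\eK(\vfi,\psi)$, so the covariance kernel of $\bGamma$ is $C$. Uniqueness is then automatic, because any Gaussian measure with covariance $\eK$ has characteristic functional $\chi$, and a Radon measure on $\Phi'$ is determined by its characteristic functional.

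The main obstacle is not a computation but the upgrade from a finitely additive cylinder set measure to a genuine countably additive Radon measure concentrated on $\Phi'$. Positive-definiteness and continuity of $\chi$ only guarantee, via Bochner's theorem on each finite-dimensional projection, a consistent family of Gaussian distributions, i.e.\ a cylinder set measure on $\Phi'$. It is precisely the nuclearity of $\Phi$ that forces this cylinder measure to be countably additive and Radon; this is the substance of Minlos' theorem, and it is the reason the nuclear hypothesis cannot be relaxed to that of an arbitrary countable Hilbert space.
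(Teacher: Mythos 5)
Your proof is correct. The paper offers no proof of this statement at all---it is imported as Minlos' theorem via the citation to \cite{Min}---and your argument (verify that $\chi(\vfi)=\exp\bigl(-\tfrac12\eK(\vfi,\vfi)\bigr)$ is normalized, continuous, and positive-definite, invoke the abstract Bochner--Minlos theorem for the nuclear space $C^\infty(E^*\otimes|\Lambda_M|)$, then identify the resulting measure through its one-dimensional pushforwards and recover the covariance kernel by polarization) is precisely the standard derivation that the citation stands for, so it matches the paper's implicit route rather than diverging from it.
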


 \begin{definition}  A     Gaussian   measure $\bGamma$ on $C^{-\infty} (E)$ is called \emph{smooth} if   $C_\bGamma$ is given by a  smooth section of $E\boxtimes E$. We will refer to it as the \emph{covariance density}.    We will refer to   the smooth Gaussian measures  on $C^{-\infty}(E)$ as  a \emph{Gaussian  ensemble of smooth  sections} of $E$.   \qed
 \end{definition}
 
 A smooth section $C$ of $E\boxtimes E$ can be viewed  as a smooth  family of bilinear maps
 \[
 \tilde{C}_{\bx,\by}: E^*_\bx\times E^*_\by\to \bR,\;\;\bx,\by\in M,
 \]
 given by
 \[
 \tilde{C}_{\bx,\by}(\bu^*,\bv^*):=\bigl\lan\, \bu^*\otimes\bv^*, C_{\bx,\by}\,\bigr\ran,\;\;\forall \bu^*\in E_\bx^*,\;\;\bv^*\in E_\by^*,
 \]
 where $\lan-,-\ran$ denotes the natural pairing between a vector space and its dual.  In the sequel we will  identify $C_{\bx,\by}$ with the associated bilinear map $\tilde{C}_{\bx,\by}$.

The next result, proved in Appendix \ref{s: tech},    explains the  role of the smoothness   condition. 

 \begin{proposition} If the    Gaussian measure $\bGamma$ on $C^{-\infty}(E)$ is smooth, then $\bGamma\bigl(\, C^\infty(E)\,\bigr)=1$ .  In other words, a random  generalized section in the Gaussian ensemble determined by $\bGamma$ is a.s. smooth. 
 \label{prop: assm}
 \end{proposition}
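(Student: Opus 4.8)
The plan is to reduce almost-sure smoothness to almost-sure membership in a countable family of Sobolev spaces. I fix a metric $g$ on $E$, a Riemannian metric on $M$ with volume density $dV$, a compatible connection $\na$, and set $\Delta := \na^*\na + \one$, a positive formally self-adjoint elliptic operator on $C^\infty(E)$. Let $(e_n)_{n\ge 1}$ be an $L^2(E,dV)$-orthonormal eigenbasis, $\Delta e_n = \lambda_n e_n$ with $1\le \lambda_n \to \infty$. Using $g$ and $dV$, the ``musical'' map $\flat\colon e\mapsto g(e,\cdot)\otimes dV$ sends each $e_n$ to a test section $\varphi_n := \flat e_n \in C^\infty(E^*\otimes|\Lambda_M|)$, so that $L_{\varphi_n}(u) = \lan u, \varphi_n\ran$ is the $n$-th generalized Fourier coefficient of $u\in C^{-\infty}(E)$. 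For each real $s$ I define the $[0,\infty]$-valued Borel function $N_s(u) := \sum_{n\ge 1}\lambda_n^{\,s}\,L_{\varphi_n}(u)^2$ on $C^{-\infty}(E)$; it is measurable as a monotone limit of continuous partial sums, its finiteness is exactly the condition $u\in H^s(E)$, and since $M$ is compact, Sobolev embedding gives $\bigcap_{s\in\bN} H^s(E) = C^\infty(E)$. Thus it suffices to prove $\bGamma\{N_s<\infty\}=1$ for every $s$, for then the full-measure sets intersect in a full-measure set equal to $C^\infty(E)$.

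The key point is a moment estimate. Because $(L_{\varphi_n})_\#\bGamma = \bgamma_{\eK_\bGamma(\varphi_n,\varphi_n)}$, each $L_{\varphi_n}$ has finite variance $\bsE_\bGamma[L_{\varphi_n}^2] = \eK_\bGamma(\varphi_n,\varphi_n)$, and by Tonelli $\bsE_\bGamma[N_s] = \sum_{n}\lambda_n^{\,s}\,\eK_\bGamma(\varphi_n,\varphi_n)$. Here I use smoothness: the smooth covariance density $C_\bGamma\in C^\infty(E\boxtimes E)$ is the Schwartz kernel of a smoothing operator $\eT\colon C^\infty(E)\to C^\infty(E)$, characterized by $\eK_\bGamma(\flat e,\flat e')=\lan \eT e, e'\ran_{L^2}$; in particular $\eK_\bGamma(\varphi_n,\varphi_n)=\lan \eT e_n,e_n\ran_{L^2}$, and $\eT$ is self-adjoint since $C_\bGamma$ is symmetric. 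For every $N$ the operator $\Delta^N\eT\Delta^N$ again has a smooth kernel, obtained by applying $\Delta_x^N\Delta_y^N$ to $C_\bGamma$, hence is Hilbert--Schmidt and so bounded on $L^2(E)$. Using $\Delta^{-N}e_n=\lambda_n^{-N}e_n$ and self-adjointness, $\lan \eT e_n,e_n\ran = \lambda_n^{-2N}\lan \Delta^N\eT\Delta^N e_n,e_n\ran \le \lambda_n^{-2N}\,\|\Delta^N\eT\Delta^N\|_{\mathrm{op}}$. By Weyl's law $\lambda_n\gtrsim n^{2/m}$, so $\sum_n \lambda_n^{\,s-2N}<\infty$ once $2N-s>m/2$; choosing such an $N$ gives $\bsE_\bGamma[N_s]<\infty$.

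Finally, a nonnegative random variable with finite expectation is almost surely finite, so $N_s<\infty$ holds $\bGamma$-a.s., i.e. $\bGamma(H^s(E))=1$, for each $s$; intersecting over $s\in\bN$ yields $\bGamma(C^\infty(E))=1$. \emph{The main obstacle} is the decay estimate of the second paragraph, namely converting the smoothness of $C_\bGamma$ into rapid decay of the diagonal covariances $\eK_\bGamma(\varphi_n,\varphi_n)$; this rests on the standard but essential fact that an operator with smooth kernel on a compact manifold is smoothing and commutes with the integration-by-parts moves that pull out arbitrary powers of $\lambda_n$. Everything else is soft measure theory (Tonelli, Markov) together with the elliptic/Sobolev description of $C^\infty(E)$; in particular, only the second moments of the ensemble, not the full Gaussian structure, enter the argument.
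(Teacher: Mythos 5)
Your proof is correct, and its skeleton coincides with the paper's: realize $C^\infty(E)$ as the countable intersection of Sobolev spaces inside $C^{-\infty}(E)$, show each Sobolev space is Borel and has $\bGamma$-measure one, and intersect. The difference lies in how the two key facts are established. The paper handles both by citation: the Borel measurability of $\eH_k\subset C^{-\infty}(E)$ is taken from Fernique's results, and $\bGamma(\eH_k)=1$ is deduced from Minlos's theorem, the smoothness of $C_\bGamma$ serving only to verify the hypotheses of that theorem. You instead prove both from scratch: measurability because your $N_s$ is a monotone limit of continuous functions, and full measure by the second-moment computation $\bsE_\bGamma[N_s]=\sum_n\lambda_n^{s}\lan \eT e_n,e_n\ran$ combined with the smoothing-operator bound $\lan \eT e_n,e_n\ran\le \lambda_n^{-2N}\Vert\Delta^N\eT\Delta^N\Vert_{\mathrm{op}}$, Weyl's law, and Markov's inequality. (The one step you use tacitly --- that square-summability of the weighted Fourier coefficients of a distribution is \emph{equivalent} to membership in $H^s$ --- needs the density of the span of the $\varphi_n$ in the test space $C^\infty(E^*\otimes|\Lambda_M|)$, which follows from the $C^\infty$-convergence of eigenexpansions of smooth sections; this is standard elliptic theory.) What your route buys: it is self-contained, quantitative (finite expected squared $H^s$-norm of the random section for every $s$), and, as you observe, it uses only the second-moment structure rather than Gaussianity, so it applies verbatim to any probability measure on $C^{-\infty}(E)$ whose coordinate functionals $L_\vfi$ are square-integrable with a smooth covariance density. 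What the paper's route buys: brevity, and placement in the nuclear-space framework where the Minlos--Fernique machinery does the analytic work; your Hilbert--Schmidt estimate on $\Delta^N\eT\Delta^N$ is in effect a hands-on instance of exactly what that machinery encodes.
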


 Using  Propositions \ref{prop: borel} and  \ref{prop: assm} we deduce  that a smooth  Gaussian measure  on $C^{-\infty}(E)$ induces a Borel probability measure on $C^\infty(E)$.   Observe also that, for any $\bx\in M$, the induced map
 \[
 C^\infty(E)\to E_\bx,\;\;C^\infty(E)\ni\vfi\mapsto \vfi(\bx)\in E_\bx
 \]
 is Borel measurable.    The next result, proved in Appendix \ref{s: tech},  shows that   the collection of  random variables $(\,\vfi(\bx)\,)_{\bx\in M}$ is Gaussian. 
 
 \begin{proposition}  Suppose that $\bGamma$ is  a smooth Gaussian measure on $E$  with  covariance density $C$. Let $n$ be a positive integer.  Then for any points $\bx_1,\dotsc, \bx_n\in M$ and any  $\bu_i^*\in E_{\bx_i}^*$, $i=1,\dotsc, n$  the random vector
 \[
 C^\infty(E)\ni \vfi\mapsto  \bigl(\,X_1(\vfi),\dotsc, X_n(\vfi)\,\bigr)\in\bR^n,\;\; X_i(\vfi) :=\lan\, \bu_i^*,\vfi(\bx_i)\,\ran,\;\;i=1,\dotsc,n,
 \]
 is Gaussian. Moreover
 \begin{equation}
 \bsE(X_iX_j) =C_{\bx_i,\bx_j}(\bu_i^*,\bu_j^*),\;\;\forall i,j.
 \label{eq: cov_dens}
 \end{equation}
 \label{prop: gauss_vec}
 \end{proposition}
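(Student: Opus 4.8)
The plan is to realize each point-evaluation functional $X_i$ as an almost-sure limit of the continuous linear functionals $L_\vfi$ that enter the definition of $\bGamma$, and then to invoke the stability of the class of Gaussian laws under weak limits. The mechanism is a mollification: the point evaluation $\vfi\mapsto\lan\bu_i^*,\vfi(\bx_i)\ran$ is not of the form $L_\vfi$ for a smooth test density, but it is the limit of such functionals as the density concentrates at $\bx_i$.

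Concretely, for each index $i$ I would fix a smooth section $\tilde{\bu}_i^*$ of $E^*$ with $\tilde{\bu}_i^*(\bx_i)=\bu_i^*$, together with a sequence of nonnegative smooth densities $\rho_{i,k}$ of total mass $1$ whose supports shrink to $\{\bx_i\}$, and set $\vfi_{i,k}:=\tilde{\bu}_i^*\otimes\rho_{i,k}\in C^\infty(E^*\otimes|\Lambda_M|)$. For every smooth section $\psi\in C^\infty(E)$ the injection $C^\infty(E)\hra C^{-\infty}(E)$ gives
\[
L_{\vfi_{i,k}}(\psi)=\int_M\lan\tilde{\bu}_i^*(x),\psi(x)\ran\,\rho_{i,k}(dx)\longrightarrow \lan\bu_i^*,\psi(\bx_i)\ran=X_i(\psi),\qquad k\to\infty,
\]
by continuity of the integrand. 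Proposition \ref{prop: assm} gives $\bGamma\bigl(C^\infty(E)\bigr)=1$, so this pointwise convergence on $C^\infty(E)$ holds $\bGamma$-almost surely; since point evaluation is Borel measurable (as noted before the statement, using Proposition \ref{prop: borel}(iii)), the random vector $V_k:=(L_{\vfi_{1,k}},\dotsc,L_{\vfi_{n,k}})$ converges $\bGamma$-a.s., hence in distribution, to $V:=(X_1,\dotsc,X_n)$.

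Next I would use that each $V_k$ is a centered Gaussian vector. Any real linear combination $\sum_i t_iL_{\vfi_{i,k}}$ equals $L_{\sum_i t_i\vfi_{i,k}}$, which is a centered Gaussian variable directly from the definition of a Gaussian measure; by the Cramér-Wold criterion $V_k$ is a centered Gaussian vector with covariance matrix $\Sigma^{(k)}_{ij}=\eK_\bGamma(\vfi_{i,k},\vfi_{j,k})$. Writing $\eK_\bGamma$ through the smooth covariance density $C$ and performing the same concentration argument on $M\times M$, smoothness of $C$ yields
\[
\Sigma^{(k)}_{ij}=\int_{M\times M}C_{x,y}\bigl(\tilde{\bu}_i^*(x),\tilde{\bu}_j^*(y)\bigr)\,\rho_{i,k}(dx)\,\rho_{j,k}(dy)\longrightarrow C_{\bx_i,\bx_j}(\bu_i^*,\bu_j^*)=:\Sigma_{ij}.
\]

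Finally I would combine the two limits. Since $\Sigma^{(k)}\to\Sigma$, the characteristic functions $\exp\bigl(-\tfrac12 t^\top\Sigma^{(k)}t\bigr)$ converge pointwise to $\exp\bigl(-\tfrac12 t^\top\Sigma t\bigr)$, so by Lévy's continuity theorem $V_k$ converges in distribution to the centered Gaussian law $N(0,\Sigma)$ (admitting degenerate covariances). Weak limits are unique and we already have $V_k\Rightarrow V$, so the law of $V$ is exactly $N(0,\Sigma)$; this simultaneously shows that $(X_1,\dotsc,X_n)$ is Gaussian and that $\bsE(X_iX_j)=\Sigma_{ij}=C_{\bx_i,\bx_j}(\bu_i^*,\bu_j^*)$, which is \eqref{eq: cov_dens}. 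The only genuinely delicate point is the almost-sure identification of the limit vector: it rests on Proposition \ref{prop: assm}, so that sample paths are smooth and point evaluation is meaningful, together with the measurability afforded by Proposition \ref{prop: borel}; everything else is the routine closedness of the Gaussian family under weak convergence.
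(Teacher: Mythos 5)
Your proof is correct and takes essentially the same approach as the paper's: the paper likewise mollifies the point evaluations by smearing $\bu_i^*$ against smooth densities $\delta_{\nu,i}|dV_g|$ concentrating at $\bx_i$, uses that the resulting functionals $L_{\Phi_\nu}$ are Gaussian by the very definition of $\bGamma$, and passes to the a.s.\ (hence distributional) limit, with Proposition \ref{prop: assm} justifying pointwise convergence on the full-measure set of smooth sections. The only cosmetic difference is that the paper fixes a scalar linear combination $\sum_i t_i X_i$ at the outset (Cram\'er--Wold applied to the limit vector), whereas you apply Cram\'er--Wold to the approximating vectors $V_k$ and then invoke L\'evy's continuity theorem together with convergence of the covariance matrices.
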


 A section $C\in C^\infty(E\boxtimes E)$ is called  \emph{symmetric} if
 \[
 C_{\bx,\by}(\bu^*,\bv^*)=C_{\by,\bx}(\bv^*,\bu^*),\;\;\forall \bx,\by\in M,\;\;\forall \bu^*\in E_\bx^*,\;\;\bv^*\in E_\by^*.
 \]
If $C$  is the covariance   density of  a  smooth Gaussian measure $\Gamma$ on $C^{-\infty}(E)$, then  Proposition  \ref{prop: gauss_vec} shows that $C$ is symmetric.  

A symmetric section  section  $C\in C^\infty(E\boxtimes E)$ is  called  \emph{nonnegative/positive definite} if all  the symmetric  bilinear forms $C_{\bx,\bx}$ are such. Clearly  the covariance density  of a smooth Gaussian measure $\Gamma$ on $C^{-\infty}(E)$ is symmetric and nonnegative definite.   

 \begin{definition}  (a) A \emph{correlator} on $E$ is a section $C\in C^\infty(E\boxtimes E)$  which is symmetric and  nonnegative definite.  The correlator  is called \emph{nondegenerate}   if it is \emph{positive} definite. 
 
 \noindent (b) A correlator $C\in C^\infty(E\boxtimes E)$ is called \emph{stochastic} if it is the covariance density of a  Gaussian  ensemble smooth  sections of $E$.
 
 \noindent (c) A  Gaussian ensemble of smooth sections of $E$   is called \emph{nondegenerate} if its covariance density is a \emph{nondegenerate} correlator.   
 \qed
 \end{definition}
 
 \begin{remark} Minlos'  Theorem \ref{th: minlos}  shows that not all correlators are stochastic.   The results in \cite{Bax} show that a correlator  $C$ is stochastic if and only if  it is a \emph{reproducing kernel},  i.e., for any natural number $n$, any points $\bx_1,\dotsc, \bx_n\in M$ and any  $\bu_i^*\in E_{\bx_i}^*$, $i=1,\dotsc, n$,  the  symmetric matrix
 \[
 \bigl(\, C_{\bx_i,\bx_j}(\bu_i^*,\bu_j^*),\bigr)_{1\leq i,j\leq n}
 \]
 is nonnegative definite.\qed
 \end{remark}

 \begin{lemma} There exist nondegenerate  Gaussian ensembles of smooth  sections of $E$.
 \label{lemma: gauss}
 \end{lemma}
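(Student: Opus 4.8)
The plan is to exhibit a nondegenerate ensemble explicitly, by taking a random linear combination with independent standard Gaussian coefficients of finitely many smooth sections that span every fiber of $E$. The single geometric ingredient is the standard fact that, since $M$ is compact, there exist finitely many sections $s_1,\dots,s_N\in C^\infty(E)$ with $\spa\{s_1(\bx),\dots,s_N(\bx)\}=E_\bx$ for every $\bx\in M$. One produces these by covering $M$ with finitely many trivializing open sets, choosing a local frame over each, and gluing with a partition of unity subordinate to the cover; equivalently, one realizes $E$ as a quotient of a trivial bundle $\underline{\bR^N}_M$ and takes the images of the constant sections.

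Given such sections, let $g_1,\dots,g_N$ be independent standard Gaussian random variables and form the random section $\xi=\sum_{k=1}^N g_k s_k$, which takes values in the finite-dimensional subspace $W=\spa\{s_1,\dots,s_N\}\subset C^\infty(E)$. I would first check that the law $\bGamma$ of $\xi$, regarded as a measure on $C^{-\infty}(E)$, is a smooth Gaussian measure. For any $\vfi\in C^\infty(E^*\otimes|\Lambda_M|)$ one has $L_\vfi(\xi)=\sum_k g_k L_\vfi(s_k)$, a linear combination of the $g_k$ and hence a centered Gaussian on $\bR$; thus $\bGamma$ is Gaussian in the sense defined above and is supported on $W\subset C^\infty(E)$. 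Its covariance form is $\eK_\bGamma(\vfi,\psi)=\bsE(L_\vfi L_\psi)=\sum_k L_\vfi(s_k)L_\psi(s_k)$, which is the bilinear form associated to the manifestly smooth section $C:=\sum_{k=1}^N s_k\boxtimes s_k\in C^\infty(E\boxtimes E)$. Hence $\bGamma$ is smooth with covariance density $C$.

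It remains to verify that $C$ is a nondegenerate correlator. Symmetry is immediate from the form of the defining sum. By the definition of the covariance density (equivalently, by Proposition \ref{prop: gauss_vec} and \eqref{eq: cov_dens}), for $\bx,\by\in M$, $\bu^*\in E_\bx^*$ and $\bv^*\in E_\by^*$,
\begin{equation*}
C_{\bx,\by}(\bu^*,\bv^*)=\sum_{k=1}^N \lan \bu^*,s_k(\bx)\ran\,\lan\bv^*,s_k(\by)\ran .
\end{equation*}
In particular $C_{\bx,\bx}(\bu^*,\bu^*)=\sum_k\lan\bu^*,s_k(\bx)\ran^2\ge 0$, so $C$ is nonnegative definite, and this quantity vanishes exactly when $\bu^*$ annihilates every $s_k(\bx)$. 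Since the $s_k(\bx)$ span $E_\bx$, this forces $\bu^*=0$, so each $C_{\bx,\bx}$ is positive definite and $C$ is nondegenerate. As $C$ is by construction the covariance density of the ensemble determined by $\bGamma$, it is stochastic, and the ensemble is nondegenerate, which proves the lemma.

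A remark on where the work actually sits. There is essentially no analytic obstacle: because $\xi$ lives in a finite-dimensional space of smooth sections, the measure-theoretic points — that $\bGamma$ is a genuine Borel probability measure on $C^{-\infty}(E)$ concentrated on $C^\infty(E)$, with smooth covariance kernel — are automatic, and one need not invoke Minlos' Theorem \ref{th: minlos} to produce the measure. The only substantive ingredient is the geometric one, the existence of global sections spanning every fiber, together with the clean identification that pointwise positive definiteness of the correlator is equivalent to exactly this spanning condition.
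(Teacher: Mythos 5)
Your proof is correct and follows essentially the same route as the paper: both build a finite-type ensemble from a finite ample family of smooth sections (the paper's ample subspace $\bsU$ with a chosen inner product), push forward the Gaussian measure, and deduce nondegeneracy of $C_{\bx,\bx}$ from the fiberwise spanning property. Your version with i.i.d. standard Gaussian coefficients on spanning sections $s_1,\dotsc,s_N$ is just the coordinate form of the paper's computation $C_{\bx,\by}(\bu^*,\bv^*)=(\ev_\bx^*\bu^*,\ev_\by^*\bv^*)_{\bsU^*}$, so no substantive difference remains.
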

 
 \begin{proof}     Fix a finite dimensional  subspace $\bsU\in C^\infty(E)$ which is \emph{ample}, i.e., for any $\bx\in M$ the evaluation map  $\ev_\bx:\bsU\to E_\bx$, $\bu\mapsto\bu(\bx)$ is onto.  (The existence of such spaces is a classical fact, proved e.g. in \cite[Lemma 23.8]{BT}.) By duality we obtain injections $\ev_\bx^*:E_\bx^*\to \bsU^*$.
 
 Fix  an Euclidean inner product $(-,-)_\bsU$  on $\bsU$ and denote by $\bgamma$ the Gaussian measure on $\bsU$ canonically determined by this product.  Its covariance pairing $\bsU^*\times \bsU^*\to \bR$  coincides with  $(-,-)_{\bsU^*}$,  the inner product on $\bsU^*$ induced by $(-,-)_\bsU$. More precisely, this means that for any $\xi,\eta\in \bsU^*$ we have
 \begin{equation}
 (\xi,\eta)_{\bsU^*}=\int_\bsU \lan\xi,s\ran\lan\eta,s\ran \bgamma(ds).
 \label{eq: u*}
 \end{equation}
The measure $\bgamma$ defines   a smooth Gaussian measure $\hat{\bgamma}$  on $C^{-\infty}(E)$ such that $\hat{\bgamma}(\bsU)=1$.  Concretely, $\hat{\bgamma}$  is the pushforward of $\bgamma$ via the natural inclusion $\bsU\hra C^\infty(E)$.   This is a smooth measure. Its covariance  density $C$ is computed as follows:   if $\bx,\by\in M$, $\bu^*\in  E^*_\bx$, $\bv^*\in E^*_\by$, then
 \[
 C_{\bx,\by}(\bu^*,\bv^*)= \int_\bsU \lan\bu^*,s(\bx)\ran\lan \bv^*,s(\by)\ran \gamma(ds)= \int_\bsU \lan\ev_\bx^*\bu^*,s\ran\lan\ev^*_\by\bv^*,s\ran \bgamma(ds) 
 \]
 \[
 \stackrel{(\ref{eq: u*})}{=} (\ev^*_\bx\bu^*,\ev^*_\by\bv^*).
 \]
 In particular, when $\bx=\by$ we observe that $C_{\bx,\bx}$ coincides with the restriction to $E_\bx^*$ of the inner product $(-,-)_{\bsU^*}$ so  the form $C_{\bx,\bx}$ is positive definite.
 \end{proof}
 
 \begin{definition} A  Gaussian ensemble of smooth   sections of $E$  with associated Gaussian measure $\bGamma$  on $C^{-\infty}(E)$  is said to have \emph{finite-type} if there exists a finite dimensional  subspace $\bsU\subset C^\infty(E)$ such that $\bGamma(\bsU)=1$. \qed
 \end{definition}
 
 \begin{remark} The Gaussian ensemble constructed in  Lemma \ref{lemma: gauss} has finite type. All the nondegenerate finite type Gaussian  ensembles of  smooth  sections can be obtained in this fashion and, as explained  in \cite{ELL, NS}, the results of Narasimhan and Ramanan \cite{NR1} show that any  pair (metric, compatible connection) on $E$        is determined by the correlator of a   finite type ensemble of smooth sections on $E$.
 
  However, there exist  nondegenerate gaussian ensembles which are not of finite type.  They can be constructed  using an approach  conceptually similar to the one we used in  Lemma \ref{lemma: gauss}. The only difference is that instead of  a finite dimensional  ample space of sections  we use an ample Banach space of $C^k$ sections equipped with a Gaussian measure. For details we refer to \cite{Bax}. \qed
 \end{remark}

 \begin{definition}\label{rem: transversal} A Gaussian ensemble of smooth sections of $E$ is called \emph{transversal}, if  a random section of this  ensemble is a.s. transversal to the zero section of $E$.\qed
 \end{definition}
 
 For the proof of the next  result we refer to Appendix \ref{s: tech}. 
 
 \begin{proposition}\label{prop: transversal}  Any  nondegenerate   ensemble of smooth sections of $E$ is transversal.
 \end{proposition}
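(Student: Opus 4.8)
The plan is to reduce the assertion to the classical parametric transversality theorem of Thom, applied to a finite-dimensional ample family of sections, and then to upgrade the resulting ``Lebesgue-almost every'' statement to a $\bGamma$-almost sure statement by means of the Cameron--Martin structure of $\bGamma$. The first step is to reinterpret nondegeneracy as an ampleness property of the reproducing kernel (Cameron--Martin) Hilbert space $H\subset C^\infty(E)$ attached to $\bGamma$. Because the covariance density is smooth, the representers $k_{\bx,\bu^*}\in H$ characterized by the reproducing identity $\lan h,k_{\bx,\bu^*}\ran_H=\lan \bu^*,h(\bx)\ran$ are smooth sections of $E$, and their Gram matrix is exactly $\bigl(C_{\bx,\bx}(\bu^*,\bv^*)\bigr)$. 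Thus positive definiteness of every form $C_{\bx,\bx}$ is equivalent to injectivity of $\ev_\bx^*:E_\bx^*\to H$, i.e.\ to surjectivity of each evaluation map $\ev_\bx:H\to E_\bx$. Using the compactness of $M$ I would then extract a \emph{finite-dimensional} subspace $\bsU\subset H$ that is still ample in the sense of Lemma \ref{lemma: gauss}: for every $\bx\in M$ the restriction $\ev_\bx|_\bsU$ is onto.

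With such a $\bsU$ in hand, the geometric core is the standard Thom argument. For any fixed smooth section $s_0$, consider the smooth map $F:\bsU\times M\to E$, $F(\bu,\bx)=(s_0+\bu)(\bx)$. Ampleness of $\bsU$ forces the partial differential of $F$ in the $\bsU$-variable to be fiberwise surjective onto $E_\bx$, so $F$ is transversal to the zero section of $E$. The parametric transversality theorem then yields that for Lebesgue-almost every $\bu\in\bsU$ the section $s_0+\bu$ is transversal to the zero section. Since on the finite-dimensional space $\bsU$ any nondegenerate Gaussian measure is mutually absolutely continuous with Lebesgue measure, the same conclusion holds for $\bu$ outside a set of measure zero for the canonical Gaussian $\gamma_\bsU$ on $\bsU$.

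The transfer step then globalizes this. Fixing an $H$-orthonormal basis $u_1,\dots,u_k$ of $\bsU$, Gaussian measure theory supplies a Cameron--Martin disintegration $s=\sum_i t_i(s)\,u_i+s^\perp$ in which the coordinate vector $(t_1,\dots,t_k)$ is a standard Gaussian independent of the complementary component $s^\perp$, which is a.s.\ smooth by Proposition \ref{prop: assm}. Writing $T\subset C^\infty(E)$ for the event that a section is transversal to the zero section (a Borel set, by the measurable jet-evaluation structure furnished by Propositions \ref{prop: borel} and \ref{prop: gauss_vec}), Fubini's theorem gives
\[
\bGamma\bigl(\,C^\infty(E)\setminus T\,\bigr)=\bsE\Bigl[\,\gamma_\bsU\bigl\{\,(t_i)\,:\, s^\perp+\textstyle\sum_i t_i u_i\notin T\,\bigr\}\,\Bigr].
\]
For almost every $s^\perp$ the inner set is Lebesgue-null by the previous paragraph (apply the parametric transversality conclusion with $s_0=s^\perp$), hence $\gamma_\bsU$-null, and therefore $\bGamma(T)=1$, which is the claim.

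The step I expect to be the main obstacle is this last transfer: one must make rigorous the Cameron--Martin disintegration relative to the finite-dimensional subspace $\bsU\subset H$ in a genuinely infinite-dimensional setting, where $\bGamma(H)=0$. Concretely, one has to produce the correct measurable coordinate functionals realizing the $u_i$-components of a typical (non-$H$) sample and verify their joint normality and independence from $s^\perp$; this is precisely where the abstract Gaussian theory of Bogachev and Gelfand--Vilenkin, combined with the a.s.\ smoothness of samples, carries the argument. By contrast, the implication ``ampleness of $\bsU$ $\Rightarrow$ transversality of $F$'' is routine.
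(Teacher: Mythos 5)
Your proposal is correct, but it takes a genuinely different route from the paper's proof. The paper localizes: it reduces the statement to a nondegenerate Gaussian random map $F:\Omega\times B\to\bR^r$ on a ball $B\subset\bR^m$, and treats the case $r<m$ by adjoining an independent, finite-dimensional, ample Gaussian family of maps $\bv:B\to\bR^{m-r}$, so that the augmented field $\hat F=\bv\oplus F$ maps into $\bR^m$; it then invokes the Bulinskaya-type regularity result \cite[Prop. 6.5]{AzWs} for square fields, for which only boundedness of the one-point densities (i.e., $0$-jet nondegeneracy) is required. You instead work globally: you convert nondegeneracy of $C$ into surjectivity of the evaluation maps $\ev_\bx:H\to E_\bx$ from the Cameron--Martin space, extract by compactness a finite-dimensional ample subspace $\bsU\subset H$, apply Thom's parametric transversality theorem to $(\bu,\bx)\mapsto s_0(\bx)+\bu(\bx)$, and transfer the resulting Lebesgue-null statement to $\bGamma$ via the product decomposition $\bGamma\cong\gamma_\bsU\otimes\bGamma^\perp$. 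What your route buys: the geometric input (Thom/Sard) is elementary, no localization is needed, and the genericity is carried by the perturbation parameters themselves, so \emph{every} zero of the perturbed section $s^\perp+\sum_i t_iu_i$ is seen by the argument; in the augmentation scheme, by contrast, one must verify that non-transversal zeros of $F$ are actually detected by the augmented field --- that is, that they lie on the zero locus of $\bv$ --- and this is the delicate point in that reduction. What the paper's route buys: brevity, given the cited Azaïs--Wschebor lemma, and no Cameron--Martin machinery whatsoever. Finally, the step you flag as the main obstacle is less severe than you fear: you may take $\bsU$ to be spanned by finitely many kernel representers $k_{\bx_j,\bu_j^*}=C_{\cdot,\bx_j}(\cdot,\bu_j^*)$, and then the coordinate functionals dual to an $H$-orthonormal basis of $\bsU$ are explicit finite linear combinations of the point evaluations $s\mapsto\lan\bu^*,s(\bx)\ran$; their joint Gaussianity with any finite collection of evaluations of $s$ is exactly Proposition~\ref{prop: gauss_vec}, the independence of $s^\perp=s-\sum_i t_i(s)u_i$ from $(t_i)$ is a short covariance computation using the reproducing property, and the transversality event is Borel simply because it is open in the $C^1$ topology ($M$ being compact), hence Borel in $C^{-\infty}(E)$ by Proposition~\ref{prop: borel}.
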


\subsection{Statements of the main results}  The main goal of this paper is to investigate some of the rich geometry of a nondegenerate Gaussian ensemble of smooth sections of $E$.      By definition, the correlator  $C$ of such an  ensemble defines a metric  on the dual bundle $E^*$, and thus on $E$ as well.  In  Example \ref{ex: corr}   we illustrate these abstract  constructions  on some familiar situations.

Less obvious  is the fact that,  in general  a nondegenerate correlator  $C$, \emph{not necessarily stochastic},  induces a connection $\nabla^C$  on $E$  compatible with the canonical metric defined by  $C$.  We will refer to this    metric/connection as the correlator metric/connection. We prove this fact  in Proposition \ref{prop: corr_conn}. 

This  connection  depends only on the first order jet of $C$ along the diagonal of $M\times M$.    Using the correlator metric   we can identify the bilinear form $C_{\bx,\by}$ with a linear map $T_{\bx,\by}: E_\by\to E_\bx$.  The definition of the connection shows that its infinitesimal parallel transport    is given by the first order jet of $T_{\bx,\by}$ along the diagonal $\bx=\by$.     

If the correlator    $C$ is  stochastic,  then the connection $\nabla^C$ and    its curvature can be given a probabilistic interpretations.     Proposition \ref{prop: gauss} gives a purely probabilistic  description of its curvature.   This result  contains as a special case Gauss'  Theorema Egregium.

\begin{remark}\label{rem: cor_con} The construction  of $\nabla^C$ in Proposition \ref{prop: corr_conn} feels very classical, but we were   not able to  trace any reference.  In the special case  when $C$ is   a  stochastic   correlator,  this connection is the $L$-$W$ connection in  \cite[Prop. 1.1.1]{ELL}. It can be given  a probabilistic description, \cite[Prop. 1.1.3]{ELL},  or a more geometric  description   obtained by using the stochastic correlator to canonically embed $E$ in a  trivial Hilbert bundle. These constructions  use in an essential way the    reproducing kernel property of a stochastic correlator.  Proposition  \ref{prop: corr_conn} shows that we need a lot less to produce a metric and a compatible connection.  \qed
\end{remark}

Section \ref{s: sgb} contains the main result of this paper, Theorem \ref{th: sgb}.       In this section we need to assume that both $M$ and $E$ are oriented, and the rank of  $E$ is even and  not greater than the dimension of $M$.    Let us digress to recall the classical Gauss-Bonnet-Chern theorem.  

The  Chern-Weil construction associates to  a metric on $E$ and connection $\nabla$ compatible  with this metric  an \emph{Euler form}  $\be(E,\nabla)\in \Omega^r(M)$; see  \cite[Chap.8]{N1}. This form is closed, and  the  cohomology class it determines called  is the \textbf{\emph{geometric}} \emph{Euler class} of $E$.  This cohomology class is  independent of the choice of metric and compatible connection.

If $\bv$  is a  smooth section of $E$ transversal to the zero section,  then its zero locus $Z_{\bv}$ is a   compact, codimension $r$-submanifold of $M$ equipped with a canonical orientation. As such,  it defines  a \emph{closed} integration current $[Z_{\bv}]$ of dimension $(m-r)$ whose  homology class  is independent of   the choice of transversal  section $\bv$.  This means that, if $\bv_0,\bv_1$ are two sections of $E$ transversal to the zero section, then 
\begin{equation}\label{homo}
\int_{Z_{\bv_0}}\eta=\int_{Z_{\bv_1}}\eta,\;\;\forall \eta \in \Omega^{m-r}(M)\;\;\mbox{such that}\;\;\boxed{d\eta=0}.
\end{equation}
Indeed,  using Sard's theorem as  in  \cite[Chap.5, Lemma 2]{Miln},  we can   find an \emph{oriented} $C^1$-submanifold with boundary $\hat{Z}\subset [0,1]\times M$ such that,  $\pa \hat{Z}\cap(\{i\} \times M)=\{i\}\times Z_{\bv_i}$, $i=0,1$, and we have an equality of \emph{oriented} manifolds, $\pa\hat{Z}= Z_{\bv_1}\sqcup-Z_{\bv_0}$. The equality (\ref{homo}) now follows from Stokes' theorem.

The\textbf{ \emph{topological}} \emph{Euler class} of the real, oriented vector bundle $E$ is the cohomology class  of $M$ defined as the pullback  of the Thom class of $E$ via a(ny) section of this bundle. Equivalently, the \emph{topological Euler class} of $E$ is equal to  Poincar\'{e} dual of the homology class of $M$ determined by the zero-locus current $[Z_{\bv_0}]$  defined by  a transversal section $\bv_0$; see   \cite[Exercise 8.3.21]{N1}.

The classical Gauss-Bonnet-Chern theorem  states that the  \emph{topological Euler class} of $E$ is equal to the \emph{geometric Euler class}; see \cite[Chap.IV, Thm.1.51]{HL} or \cite[Thm. 8.3.17]{N1}. This means  that,   for any metric  on $E$, and any connection $\nabla^0$  compatible with the metric,  the  closed current  $[Z_{\bv_0}]$ is \emph{homologous} to the closed current defined by the Euler form $\be(E,\nabla^0)$, i.e., 
\begin{equation}
\int_{Z_{\bv_0}}\eta =\int_M \eta\wedge \be(E,\nabla^0),\;\;\forall \eta\in \Omega^{m-r}(M)\;\;\mbox{such that}\;\;\boxed{d\eta=0}.
\label{gbc1}
\end{equation}
The main goal of this paper is to provide a probabilistic refinement of the above equality.

 Fix a  nondegenerate  Gaussian ensemble of smooth sections of $E$. This determines  a metric  and a  compatible connection $\nabla^{\rm stoch}$ on $E$.     It thus  determines an Euler form $\be(E,\nabla^{\rm stoch})$ on $M$. 

Since our ensemble of sections is nondegenerate it is also transversal according to Proposition \ref{prop: transversal} and thus the current $Z_\bu$ will be well defined for almost all $\bu$ in the  ensemble.  In Theorem \ref{th: sgb} we prove a stochastic  Gauss-Bonnet  formula   stating that the expectation of the random current $[Z_\bu]$ is equal to the  current defined by the Euler form $\be(E,\nabla^{\rm stoch})$, i.e.,
\begin{equation}
\bsE\left(\int_{Z_\bu}\eta\right)=\int_M  \eta\wedge \be(E,\nabla^{\rm stoch}),\;\;\forall \eta\in\Omega^{m-r}(M).
\label{gbc2}
\end{equation}
\begin{remark} (a) Let us point out that the cohomological formula  (\ref{gbc1}) is  a consequence of (\ref{gbc2}).   To see this, fix  an arbitrary metric $h$ on $E$ and a connection  $\nabla^0$ compatible with $h$. Denote by $\be(E,\nabla^0)$ the associate Euler form. Next,  fix a smooth section $\bv_0$ of $E$ that is transversal to $0$.
 
 As indicated in Remark \ref{rem: cor_con}, there exists a finite-type  nondegenerate Gaussian ensemble of  smooth sections of $E$  whose  associated metric is $h$ and associated connection is $\nabla^0$, i.e., $\nabla^{\rm stoch}=\nabla^0$.  Then, a.s., a section $\bu$  in this ensemble is smooth and transversal to  $0$. From   (\ref{homo}) we deduce that for any \emph{closed form} $\eta\in \Omega^{m-r}(M)$ we have the   a.s. equality
 \[
 \int_{Z_{\bv_0}} \eta =\int_{Z_{\bu}}\eta.
 \]
By taking the expectations   of  both sides   and then invoking (\ref{gbc2}) we deduce (\ref{gbc1}).

(b) The stochastic formula (\ref{gbc2}) is stronger than the cohomological one because the   Euler \emph{class} of $E$  could be zero (in cohomology), yet there exist metric connections on $E$   whose associated   Euler \emph{forms} are nonzero.

(c) The stochastic Gauss-Bonnet-Chern formula (\ref{gbc2}) has  a local character. It  suffices to prove it  for forms $\eta$ supported  on coordinate neighborhoods over which $E$ is trivializable.  The general case  follows form these special ones by using  partitions of unity and the obvious  linearity in $\eta$ of both sides of (\ref{gbc2}).  This is in fact the strategy we adopt in our proof. 

(d) In Remark \ref{rem: non}(b) we explain   what happens in the case when the  Gaussian  ensemble of random sections is no longer centered,  say $\bsE(\bu)=\bu_0\in C^\infty(E)$.    Formula (\ref{gbc2})   gets replaced  by (\ref{sgbunc}),   where in the right hand side  we  get  a different  term  that explicitly depends on  the geometry of $E$ and the bias  $\bu_0$.\qed
 \end{remark}
 
  We prove the stochastic formula (\ref{gbc2}) by reducing it to the   Kac-Rice formula  \cite[Thm. 6.4,6.10]{AzWs} using  a bit of differential geometry  and  certain   Gaussian  computations  we  borrowed from  \cite{AT}. For the reader's convenience  we  have included  in Appendix \ref{s: pf} a brief survey of these facts.

\subsection{Related results} In our earlier work \cite{NS} we  proved a special case  of this stochastic  Gauss-Bonnet  formula for nondegenerate Gaussian ensembles of finite type.  The proof   in  \cite{NS} is differential geometric in nature and does not extend  to the general situation discussed in the present paper.  

In \cite{N4}  we used  related probabilistic techniques to prove a  cohomological Gauss-Bonnet-Chern  formula of the type (\ref{gbc1}) in the special case   when $E=TM$,  and the connection $\nabla$ is the Levi-Civita connection of a metric on  $TM$.    Still in the case $E=TM$,  one can use  rather different probabilistic ideas (Malliavin calculus) to prove the cohomological Gauss-Bonnet; the case  when $\nabla$ is  the Levi-Civita   connection of a metric  on $M$ was investigated by E. Hsu \cite{Hsu}, while   the  case of a general  metric connection on $TM$ was recently investigated by H. Zhao \cite{Zhao}.

\subsection*{Acknowledgments.}   I want to thank the anonymous   referee for the very constructive and informative comments, suggestions and questions  that  helped improve the quality of  this paper.

\section{The differential geometry of correlators}
\setcounter{equation}{0}

A correlator  on a real vector bundle $E\to M$ naturally induces   additional  geometric structures on $E$. More precisely, we will show  that it induces a metric on $E$ together with a connection compatible with this metric.     Here are  a few circumstances  that lead   to correlators.
 
 \begin{ex}\label{ex: corr}  (a) Suppose that $M$ is a properly embedded submanifold of the Euclidean space $\bsU$. Then the   inner product $(-,-)_\bsU$  on $\bsU$ induces  a correlator $C\in C^\infty(T^*M\boxtimes T^*M)$ defined by the equalities
 \[
 C_{\bx,\by}(X,Y)= (X,Y)_\bsU,\;\;\forall \bx,\by\in M,\;\;X\in T_\bx M\subset \bsU,\;\;Y\in T_\by M\subset \bsU.
 \]
 (b)    For any real vector space $\bsU$  and any smooth manifold $M$ we denote by $\underline{\bsU}_M:=\bsU\times M\to M$ the trivial vector bundle over $M$ with fiber $\bsU$. 
 
 Suppose that $\bsU$ is a real, finite dimensional Euclidean space with inner product $(-,-)$.  This induces an  inner product $(-,-)_*$ on $\bsU^*$. Suppose   that $E\to M$ is a smooth real vector bundle over $M$ and $P:\underline{\bsU}_M\to E$ is a  fiberwise surjective   bundle morphism.  In other words, $E$ is a quotient bundle of a trivial real metric vector bundle. The dual  $ P^*: E^*\to \underline{\bsU}^*_M$ is an injective bundle morphism. Hence $E^*$ is a subbundle of a trivial   metric real vector bundle. 
 
 For any $\bx\in M$ and any $u^*\in E^*_\bx$  we obtain a vector $P_\bx^* u^*\in \bsU^*_\bx=$ the fiber of $\underline{\bsU}^*_M$ at $\bx\in M$. This allows us to  define a  correlator $C\in C^\infty(E\boxtimes E)$   given by
 \[
 C_{\bx_1,\bx_2}(u_1^*,u_2^*)=\bigl(\, P_{\bx_1}^*u_1^*,\;\;P^*_{\bx_2} u_2^*\,\bigr)_*,\;\;\forall \bx_i\in M,\;\;u_i^*\in E^*_{\bx_i}\;\;i=1,2.\proofend
 \]
 \end{ex}     
 
 Observe that, by definition, a correlator $C\in C^\infty(E\boxtimes E)$   induces a metric on $E^*$ and thus, by duality, a metric on $E$.  We will denote   both these metric by $(-,-)_{E^*,C}$ and respectively  $(-, -)_{E,C}$. When no confusion is possible will will drop the subscript $E$ or $E^*$ from the notation.   To simplify the presentation  we    adhere to the following conventions.

 \begin{enumerate}

 \item      We will use the Latin letters  $i,j,k$ to denote indices  in the range $1,\dotsc, m=\dim M$.
 
 \item We will use   Greek letters $\alpha,\beta,\gamma$ to denote indices in the range $1,\dotsc, r= {\rm rank}\,(E)$.
 
 \end{enumerate}

Using the metric $(-,-)_C$ we can identify $C_{\bx,\by}\in E_\bx\otimes E_\by$ with an element of 
 \[
 T_{\bx,\by}\in E_\bx\otimes E_\by^*\cong \Hom(E_\by, E_\bx).
 \] 
 We will refer to $T_{x,y}$ as the \emph{tunneling map}  associated to the correlator $C$. Note that $T_{\bx,\bx}=\one_{E_\bx}$.  If we denote by $T_{\bx,\by}^*\in \Hom(E_\by,E_\bx)$ the adjoint of $T_{\bx,\by}$ with respect to the metric $(-,-)_{E, C}$, then the symmetry  of $C$  implies  that
 \[
 T_{\by,\bx}=T_{\bx,\by}^*.
 \]
 
 \begin{lemma}  Fix a point $\bp_0\in M$ and   local coordinates $(x^i)_{1\leq i\leq m}$ in a neighborhood $\eO$ of $\bp_0$ in $M$.   Suppose that  $ \underline{\be}(x)=(\be_\alpha(x))_{1\leq \alpha\leq r}$ is a  local $(-,-)_C$-orthononomal frame of $E|_{\eO}$. We regard it as an isomorphism of metric bundles $ \ur^r_\eO\to  E|_\eO $. We obtain   a smooth map
 \[
 T(\underline{\be}): \eO\times \eO\ra  \Hom(\bR^r) ,\;\;(x,y)\mapsto T(\ube)_{x,y}=\underline{\be}(x)^{-1}T_{x,y}\underline{\be}(y).
 \]
 Then for any $i=1,\dotsc, m$ the operator 
 \[
 \pa_{x^i} T(\underline{\be})_{x,y}|_{x=y}:\ur^r_{y}\to\ur^r_{y},
 \]
 is skew-symmetric.
 \end{lemma}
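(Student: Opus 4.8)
The plan is to reduce everything to two identities satisfied by the matrix-valued function $A(x,y):=T(\ube)_{x,y}=\ube(x)^{-1}T_{x,y}\ube(y)$ on $\eO\times\eO$, and then to differentiate them along the diagonal.

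First I would record the two structural constraints on $A$. Since the frame $\ube$ is $(-,-)_C$-orthonormal, the map $\ube(x)\colon\bR^r\to E_x$ is a linear isometry, so its inverse coincides with its adjoint, $\ube(x)^{-1}=\ube(x)^*$. Combining this with the symmetry relation $T_{y,x}=T_{x,y}^*$ recorded before the lemma, the transpose of $A$ (the adjoint with respect to the standard inner product on $\bR^r$) computes to
\[
A(x,y)^\top=\ube(y)^*\,T_{x,y}^*\,\bigl(\ube(x)^{-1}\bigr)^*=\ube(y)^{-1}\,T_{y,x}\,\ube(x)=A(y,x).
\]
Next, the normalization $T_{x,x}=\one_{E_x}$ gives the diagonal identity $A(x,x)=\ube(x)^{-1}\ube(x)=\one$ for every $x\in\eO$.

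Then I would differentiate these two identities. Write $A_1$ and $A_2$ for the partial derivatives of $A$ in the coordinate direction $\pa_{x^i}$ in its first and second slot, respectively. Differentiating $A(x,y)^\top=A(y,x)$ in the first-slot variable and restricting to $x=y$ gives $A_1^\top=A_2$ on the diagonal, because on the right-hand side the differentiated variable occupies the second slot. Differentiating the diagonal identity $A(x,x)=\one$ with the chain rule gives $A_1+A_2=0$ on the diagonal. Combining the two yields, on the diagonal,
\[
A_1^\top=A_2=-A_1,
\]
which is precisely the skew-symmetry of $A_1|_{x=y}=\pa_{x^i}T(\ube)_{x,y}|_{x=y}$.

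I do not anticipate a genuine obstacle; the whole argument is careful bookkeeping of adjoints and slot-derivatives. The single place where care is truly needed—and where a sign or an order of composition could silently go wrong—is the transpose computation giving $A(x,y)^\top=A(y,x)$: one must use that the frame is orthonormal, so that $\ube(x)^{-1}$ is \emph{literally} the adjoint $\ube(x)^*$, in order to convert $T_{x,y}^*$ into the tunneling map $T_{y,x}$ via the symmetry of $C$. Everything else is immediate.
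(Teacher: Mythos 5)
Your proof is correct and follows essentially the same route as the paper: both arguments rest on the two identities $A(x,x)=\one$ and $A(x,y)^*=A(y,x)$ (the latter coming from orthonormality of $\ube$ together with $T_{y,x}=T_{x,y}^*$), which are then differentiated at the diagonal to yield skew-symmetry. The only cosmetic difference is that the paper performs the differentiation in sum/difference coordinates $z=x-y$, $s=x+y$, whereas you use slot derivatives of $A$ directly; the bookkeeping is equivalent.
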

 
 \begin{proof} We identify $\eO\times \eO$ with an open neighborhood of $(0,0)\in \bR\times \bR$ with coordinates $(x^i,y^j)$.  Introduce new coordinates $z^i:=x^i-y^i$, $s^j:=x^j+y^j$,  so that $\pa_{x^i}=\pa_{z^i}+\pa_{s^i}$. We view the map $T(\ube)$ as depending on the variables $z,s$.   Note that
\[
T(\ube)_{0,s}=\one,\;\;T(\ube)_{-z,s}= T(\ube)_{z,s}^*,\;\;\forall z,s.
\]
We deduce that
\[
\pa_{s^i}T(\ube)|_{0,s}=\pa_{s^i}T(\ube)|_{0,s}^*=0,
\]
\[
\pa_{x^i} T(\ube)|_{0,s} =\pa_{z^i} T(\ube)|_{0,s}+\pa_{s^i}T(\be)|_{0,s}=\pa_{z^i} T(\ube)|_{0,s},
\]
\[
\bigl(\,\pa_{x^i} T(\ube)|_{0,s}\,\bigr)^*=\pa_{x^i} T(\ube)^*|_{0,s}= -\pa_{z_i} T(\ube)|_{0,s} +\pa_{s^i} T(\ube)|_{0,s}=-\pa_{x^i} T(\ube)|_{0,s}.
\]
\end{proof}

 Given a coordinate neighborhood with coordinates $(x^i)$ and  a local  isomorphism of  metric vector bundles (local orthonormal frame) $\underline{\be}: \ur^r_\eO\to E|_\eO$ as above, we define the skew-symmetric endomorphisms
 \begin{equation}
 \Gamma_i(\underline{\be}) :\ur^r_\eO\to \ur^r_\eO,\;\;i=1,\dotsc, m=\dim M,\;\;\Gamma_i(\underline{\be})_y= -\pa_{x^i}T_{x,y}|_{x=y}.
 \label{eq: can-con1}
 \end{equation}
 We  obtain a $1$-form with matrix coefficients $\Gamma(\underline{\be}):=\sum_i \Gamma_i(\underline{\be}) dy^i$. The operator
 \begin{equation}
 \nabla^{\underline{\be}}= d+\Gamma(\underline{\be})
 \label{eq: corr_con}
 \end{equation}
 is then a connection on $\ur^r_\eO$ compatible with the metric natural metric on this trivial bundle.  The isomorphism  $\ube$ induces     a  metric connection  $\ube_*\nabla^{\ube}$ on $E|_\eO$.
 
 Suppose that $\ubf:\ur^r_\eO\to E|_\eO$ is another orthonormal frame of  $E_\eO$ related to $\ube$ via a  transition map
 \[
 g: \eO\to  O(r),\;\;\ubf=\ube\cdot g.
 \]
 Then 
 \[
 T(\ubf)_{x,y} =g^{-1}(x)T(\ube)_{x,y} g(y).
 \]
 We denote by $d_x$ the differential with  respect to the $x$ variable. We deduce
 \[
 \Gamma(\ubf)_y=-d_x T(\ubf)_{x,y}|_{x=y}
 \]
 \[
 =-\bigl(\,d_xg^{-1}(x)\,\bigr)_{x=y}\cdot \underbrace{T(\ube)_{y,y}}_{=\one}\cdot g(y)- g^{-1}(y)\bigl(\,d_xT(\ube)_{x,y} \bigr)|_{x=y} g(y)
 \]
 \[
 =g^{-1}(y) dg(y) g^{-1}(y)\cdot g(y) + g^{-1}(y) \Gamma(\ube)_y g(y) =g(y)^{-1} dg(y) + g^{-1}(y) \Gamma(\ube)_y g(y).
 \]
Thus
\[
\Gamma(\ube\cdot g)= g^{-1}dg +g^{-1}\Gamma(\ube) g.
\]
This  shows that for any    local orthonormal frames $\ube$, $\ubf$ of $E|_\eO$ we have
\[
\ube_*\nabla^{\ube}=\ubf_*\nabla^{\ubf}.
\]
We have thus proved  the following result.

\begin{proposition}\label{prop: corr_conn} If $E\to M$ is a smooth real vector bundle, then any correlator $C$ on $M$ induces a canonical metric  $(-,-)_{C}$ on $E$ and a connection $\nabla^C$ compatible with this metric.  More explicitly, if $\eO\subset M$ is an coordinate neighborhood on $M$ and $\ube:\ur^r_\eO\to E|_\eO$ is an orthogonal trivialization , then  $\nabla^C$  is  described by 
\[
\nabla^C =d +\sum_i \Gamma_i(\ube) dx^i,
\]
where the skew-symmetric  $r\times r$-matrix $\Gamma_i(\ube)$ is given by (\ref{eq: can-con1}). \qed
\end{proposition}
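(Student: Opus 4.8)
The plan is to construct the metric and the connection separately: the metric is essentially free from the diagonal of $C$, while the connection requires the frame-independence argument to be the real content. First I would produce the metric. For each $\bx$ the diagonal restriction $C_{\bx,\bx}$ is a symmetric, positive-definite bilinear form on $E^*_\bx$ (this is where the positive-definiteness of $C$ is used), depending smoothly on $\bx$; this is a metric on $E^*$, and dualizing gives the desired metric $(-,-)_C$ on $E$. With this metric fixed I would introduce the tunneling map $T_{\bx,\by}\in\Hom(E_\by,E_\bx)$, obtained by identifying $C_{\bx,\by}\in E_\bx\otimes E_\by$ with an element of $E_\bx\otimes E_\by^*$, and record the two structural identities $T_{\bx,\bx}=\one$ and $T_{\by,\bx}=T_{\bx,\by}^*$ forced by the symmetry of $C$.

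Next, over a coordinate chart $\eO$ equipped with a local orthonormal frame $\ube$, I would define the matrix-valued $1$-form $\Gamma(\ube)=\sum_i\Gamma_i(\ube)\,dx^i$ by formula (\ref{eq: can-con1}) and set $\nabla^{\ube}=d+\Gamma(\ube)$ on $\ur^r_\eO$. The preceding Lemma guarantees that each $\Gamma_i(\ube)$ is skew-symmetric, so $\nabla^{\ube}$ automatically preserves the standard inner product on $\ur^r_\eO$; transporting through $\ube$ then yields a metric connection $\ube_*\nabla^{\ube}$ on $E|_\eO$. This disposes of local existence and of compatibility with the metric in one stroke, the skew-symmetry being exactly the algebraic shadow of metric-compatibility.

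The crux — and the step I expect to carry the real weight — is well-definedness: I must show that $\ube_*\nabla^{\ube}$ is independent of the chosen orthonormal frame, so that the local connections agree on overlaps and glue to a single global connection $\nabla^C$. For a second frame $\ubf=\ube\cdot g$ with $g\colon\eO\to O(r)$, the transformation of the correlator gives $T(\ubf)_{x,y}=g^{-1}(x)\,T(\ube)_{x,y}\,g(y)$; differentiating in $x$ at $x=y$ and invoking $T(\ube)_{y,y}=\one$ produces the inhomogeneous gauge law $\Gamma(\ube\cdot g)=g^{-1}dg+g^{-1}\Gamma(\ube)\,g$, which is precisely the transformation rule of a connection $1$-form, whence $\ube_*\nabla^{\ube}=\ubf_*\nabla^{\ubf}$. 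The delicate point to watch is that, because the differentiation is only in the $x$-variable while $g(y)$ is frozen, it is the $x$-derivative of the $g^{-1}(x)$ factor — evaluated against $T(\ube)_{y,y}=\one$ — that generates the non-tensorial term $g^{-1}dg$; mishandling the evaluation at $x=y$, or forgetting which factor is constant, would destroy the matching. Once the gauge law is in place the displayed local formula for $\nabla^C$ is immediate, and gluing the frame-independent local connections completes the proof.
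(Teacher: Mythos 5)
Your proposal is correct and follows essentially the same route as the paper: the metric from the diagonal $C_{\bx,\bx}$, the tunneling map $T_{\bx,\by}$ with its two structural identities, the skew-symmetry lemma giving local metric-compatibility of $\nabla^{\ube}=d+\Gamma(\ube)$, and the gauge law $\Gamma(\ube\cdot g)=g^{-1}dg+g^{-1}\Gamma(\ube)g$ derived by differentiating $T(\ubf)_{x,y}=g^{-1}(x)T(\ube)_{x,y}g(y)$ in $x$ at $x=y$, which yields frame-independence and gluing. You have also correctly identified the one delicate point, namely that the inhomogeneous term $g^{-1}dg$ arises from the $x$-derivative of the $g^{-1}(x)$ factor evaluated against $T(\ube)_{y,y}=\one$.
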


\begin{remark}\label{rem: corr}  (a)  In the special case described in Example \ref{ex: corr},  the  connection associated to the corresponding correlator  coincides  with the Levi-Civita connection of the  metric induced by the correlator.    As mentioned earlier, for a stochastic correlator   there is an alternative, probabilistic   description  of the associated connection; see  \cite[\S 1.1]{ELL}  for details. 

 (b) Suppose that we fix local coordinates  $(x^i)$ near a point $\bp_0$  such that $x^i(\bp_0)=0$.  We denote by $P_{x,0}$ the parallel transport of $\nabla^C$ from $0$ to $X$ along the line segment  from $0$ to $x$.   Then
\[
P_{0,0}=\one_{E_0}=T_{0,0},\;\;\pa_{x^i}P_{x,0}|_{x=0}=-\Gamma_i(0)=\pa_{x^i,0} T_{x,0}|_{x=0}.
\]
We see that the tunneling map $T_{x,0}$ is a first order approximation at $0$ of the parallel transport map $P_{x,0}$ of the connection $\nabla^C$.

(c) Note that we have proved a slightly stronger result. Suppose that  $E\to M$ is  a real vector bundle equipped with a metric. An integral kernel  on $E$ is a section   $T\in C^\infty(E^*\boxtimes E)$ and defines a smooth family of linear operators $T_{\bx,\by}\in \Hom(E_\by, E_\bx)$, $\bx,\by\in M$.   We say that  an integral  kernel $T$ is a \emph{symmetric tunneling} if 
\[
T_{x,x}= \one_{E_x}, \;\; T^*_{\bx,\by}=T_{\by,\bx},\;\;\forall \bx,\by\in M.
\]
The proof of Proposition  \ref{prop: corr_conn}  shows that any symmetric tunneling on a  metric vector bundle  naturally determines a  connection compatible with the metric. \qed
\end{remark}

For later use, we want to give a more explicit description of the curvature of the connection $\nabla^C$ in the special case when   the correlator  $C$   \emph{stochastic} and thus it is the covariance density of  a nondegenerate Gaussian ensemble of smooth  sections  of $E$.

\begin{proposition} Suppose that $C$ is a stochastic correlator on $E$ defined by the nondegenerate  Gaussian ensemble smooth random sections  of $E$.      Denote by $\bu$ a random section in this ensemble. Fix a point $\bp_0$, local coordinates $(x^i)$ on $M$ near $\bp_0$ such that $x^i(\bp_0)=0$ $\forall i$, and a local $(-,-)_C$-orthonormal  frame  $\bigl(\,\be_\alpha(x) \,\bigr)_{1\leq \alpha\leq r}$  of $E$  in  a neighborhood of $\bp_0$ which is   is   synchronous  at $\bp_0$,
\[
\nabla^C\be_\alpha|_{\bp_0}=0,\;\;\forall\alpha.
\]
  Denote by $F$ the curvature of $\nabla^C$,
\[
F=\sum_{ij} F_{ij}(x) dx^i\wedge dx^j,\;\; F_{ij}(x)\in \End(E_{\bp_0}).
\]
Then $F_{ij}(0)$  is the endomorphism of $E_{\bp_0}$ which  in the frame $\be_\alpha(\bp_0)$  is  described by the $r\times r$ matrix with entries
\begin{equation}
F_{\alpha\beta|ij}(0):= \bsE\bigl(\,\pa_{x^i}u_\alpha(x)\pa_{x^j}u_\beta(x)\,\bigr)|_{x=0}-\bsE\bigl(\,\pa_{x^j}u_\alpha(x)\pa_{x^i}u_\beta(x)\,\bigr)|_{x=0},\;\;1\leq\alpha,\beta\leq r,
\label{F}
\end{equation}
where $u_\alpha(x)$ is the random function 
\[
u_\alpha(x):=\bigl(\,\bu(x),\be_\alpha(x)\,\bigr)_C.
\]
\label{prop: gauss}
\end{proposition}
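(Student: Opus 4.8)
The plan is to compute the curvature of $\nabla^C$ directly in the synchronous orthonormal frame $\ube=(\be_\alpha)$, turning the whole question into a computation of second derivatives, along the diagonal, of the covariance $\bsE(u_\alpha u_\beta)$. Two structural facts get the calculation started. First, since $\ube$ is synchronous at $\bp_0$ the connection matrices of \eqref{eq: can-con1} satisfy $\Gamma_i(\ube)(0)=0$; hence in the Cartan structural identity the quadratic term drops at the origin and the curvature coefficient reduces to
\[
F_{ij}(0)=\pa_{x^i}\Gamma_j(\ube)(0)-\pa_{x^j}\Gamma_i(\ube)(0).
\]
Second, by \eqref{eq: can-con1} we have $\Gamma_j(\ube)(y)=-\pa_{x^j}T(\ube)_{x,y}\big|_{x=y}$, so I must understand the tunneling matrix $T(\ube)_{x,y}$ and its jet along the diagonal $x=y$.

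The probabilistic bridge is supplied by Proposition \ref{prop: gauss_vec}. Writing $\be_\alpha(x)^\flat\in E^*_x$ for the $(-,-)_C$-dual of $\be_\alpha(x)$, we have $u_\alpha(x)=\langle\be_\alpha(x)^\flat,\bu(x)\rangle$, so \eqref{eq: cov_dens} gives
\[
R_{\alpha\beta}(x,y):=\bsE\bigl(u_\alpha(x)u_\beta(y)\bigr)=C_{x,y}\bigl(\be_\alpha(x)^\flat,\be_\beta(y)^\flat\bigr).
\]
Because $\ube$ is $(-,-)_C$-orthonormal, unwinding the definition of the tunneling map identifies the right-hand side with the matrix entry $(T(\ube)_{x,y})_{\alpha\beta}$. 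Thus the covariance of the Gaussian field $(u_\alpha)$ is exactly the tunneling matrix, and the connection coefficients are $\Gamma_j(\ube)(y)_{\alpha\beta}=-(\pa^{(1)}_jR_{\alpha\beta})(y,y)$, where $\pa^{(1)}_j$ and $\pa^{(2)}_i$ denote differentiation in the $j$-th (resp. $i$-th) coordinate of the first (resp. second) argument of $R$.

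The computational heart is differentiating these coefficients along the diagonal. The chain rule for restriction to $x=y$ gives
\[
\pa_{x^i}\Gamma_j(\ube)(y)_{\alpha\beta}=-(\pa^{(1)}_i\pa^{(1)}_jR_{\alpha\beta})(y,y)-(\pa^{(2)}_i\pa^{(1)}_jR_{\alpha\beta})(y,y).
\]
Forming $F_{ij}(0)=\pa_{x^i}\Gamma_j(\ube)(0)-\pa_{x^j}\Gamma_i(\ube)(0)$, the pure first-argument second derivatives $\pa^{(1)}_i\pa^{(1)}_jR$ and $\pa^{(1)}_j\pa^{(1)}_iR$ agree by smoothness of $R$ and cancel, leaving only the mixed cross terms
\[
F_{ij}(0)_{\alpha\beta}=-(\pa^{(2)}_i\pa^{(1)}_jR_{\alpha\beta})(0,0)+(\pa^{(2)}_j\pa^{(1)}_iR_{\alpha\beta})(0,0).
\]

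Finally I would pass the two surviving derivatives inside the expectation. Since $\bu$ is a.s. smooth (Proposition \ref{prop: assm}) and the jets $(\pa u_\alpha)$ form a Gaussian family whose covariances are the corresponding derivatives of the smooth kernel $R$, differentiation commutes with $\bsE$ (this is the standard Gaussian regularity recalled in Appendix \ref{s: pf}, following \cite{AT}); hence $(\pa^{(2)}_i\pa^{(1)}_jR_{\alpha\beta})(0,0)=\bsE(\pa_{x^j}u_\alpha\,\pa_{x^i}u_\beta)\big|_0$ and similarly with $i,j$ swapped. Substituting yields precisely \eqref{F}. I expect the main obstacle to be the bookkeeping in splitting the single base-point derivative into its first- and second-argument parts on the diagonal—so that the symmetric piece cancels and only the antisymmetric cross terms survive—together with the rigorous justification that differentiation may be interchanged with the expectation.
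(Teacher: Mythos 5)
Your proposal is correct and follows essentially the same route as the paper: reduce $F_{ij}(0)$ to $\pa_{x^i}\Gamma_j(0)-\pa_{x^j}\Gamma_i(0)$ via the synchronous frame, identify the tunneling matrix with the covariance $\bsE\bigl(u_\alpha(x)u_\beta(y)\bigr)$ of the field in the orthonormal frame, differentiate along the diagonal, and cancel the symmetric (mixed second derivative) terms so only the antisymmetric cross terms survive. Your two-argument kernel bookkeeping with $\pa^{(1)},\pa^{(2)}$ is just a more explicit rendering of the paper's Leibniz expansion inside the expectation, and your remark on interchanging $\pa$ with $\bsE$ makes explicit a step the paper leaves implicit.
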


\begin{proof} The random section $\bu$ has the local description
\[
\bu=\sum_\alpha u_\alpha(x)\be_\alpha(x).
\]
 Then $T(x,y)$ is a linear map $E_y\to E_x$ given by the $r\times r$ matrix
\[
T(x,y)= \bigl(\, T_{\alpha\beta}(x,y)\,\bigr)_{1\leq\alpha,\beta\leq r},\;\; T_{\alpha\beta}(x, y) = \bsE(\, u_\alpha(x)u_\beta(y)\,\bigr).
\]
The  coefficients of the connection $1$-form $\Gamma=\sum_i\Gamma_i dx^i$ are endomorphisms of $E_x$  given by $r\times r$ matrices
\[
\Gamma_i(x)=\bigl( \, \Gamma_{\alpha\beta|i}(x)\,\bigr)_{1\leq \alpha,\beta\leq r}.
\]
More precisely, we have 
\begin{equation}
\Gamma_{\alpha\beta|i}(x) = -\bsE\bigl(\,\pa_{x^i}u_\alpha(x)u_\beta(x)\,\bigr).
\label{gamai_sto}
\end{equation}
Because  the frame $\bigl(\,\be_\alpha(x)\,\bigr)$ is synchronous at $x=0$ we  deduce that, \emph{at $\bp_0$},  we have $\Gamma_i(0)=0$ and 
\[
F(\bp_0)=\sum_{i<j} F_{ij} (x)dx^i\wedge dx^j\in \End(E_{\bp_0})\otimes \Lambda^2 T^*_{\bp_0} M,\;\;  F_{ij}= \pa_{x^i}\Gamma_j(\bp_0)-\pa_{x^j}\Gamma_i(\bp_0).
\]
The coefficients $F_{ij}(x)$ are $r\times r$ matrices  with entries $F_{\alpha\beta|ij}(x)$, $1\leq\alpha,\beta\leq r$.  Moreover
\[
F_{\alpha\beta|ij}(0)=\pa_{x^j}\Gamma_{\alpha\beta|j}(0)-\pa_{x^j}\Gamma_{\alpha\beta|i}(0)
\]
\[
\stackrel{(\ref{gamai_sto})}{=}\pa_{x^j}\bsE\bigl(\,\pa_{x^i}u_\alpha(x)u_\beta(x)\,\bigr)|_{x=0}-\pa_{x^i} \bsE\bigl(\,\pa_{x^j}u_\alpha(x)u_\beta(x)\,\bigr)|_{x=0}
\]
\[
=\bsE\bigl(\,\pa^2_{x^jx^i}u_\alpha(x)u_\beta(x)\,\bigr)|_{x=0}+\bsE\bigl(\,\pa_{x^i}u_\alpha(x)\pa_{x^j}u_\beta(x)\,\bigr)|_{x=0}
\]
\[
-\bsE\bigl(\,\pa^2_{x^ix^j}u_\alpha(x)u_\beta(x)\,\bigr)|_{x=0}-\bsE\bigl(\,\pa_{x^j}u_\alpha(x)\pa_{x^i}u_\beta(x)\,\bigr)|_{x=0}
\]
\[
=\bsE\bigl(\,\pa_{x^i}u_\alpha(x)\pa_{x^j}u_\beta(x)\,\bigr)|_{x=0}-\bsE\bigl(\,\pa_{x^j}u_\alpha(x)\pa_{x^i}u_\beta(x)\,\bigr)|_{x=0}.
\]
\end{proof} 

 \begin{remark} When $C$ is the stochastic correlator  defined in Example \ref{ex: corr}(a), Proposition \ref{prop: gauss}   specializes to Gauss' Theorema Egregium.\qed
 \end{remark}
 
 \begin{corollary} Suppose that $\bu$ is a nondegenerate, Gaussian smooth random section of $E$ with covariance density $C\in C^\infty(E\boxtimes E)$. Denote by $(-,-)_C$  and respectively $\nabla^C$ the metric and respectively the connection on $E$ defined  by $C$. Then for  any $\bp_0\in M$ the random variables  $\bu(\bp_0)$ and $\nabla^C\bu(\bp_0)$ are independent.
 \label{cor: indep}
\end{corollary}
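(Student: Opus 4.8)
The plan is to exploit the single most useful property of Gaussian families: for a jointly Gaussian collection of random variables, independence is equivalent to the vanishing of all cross-covariances. Thus it suffices to verify that the Gaussian vector $\bu(\bp_0)$ is uncorrelated with the Gaussian vector $\nabla^C\bu(\bp_0)$.

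First I would set up the computation exactly as in the proof of Proposition \ref{prop: gauss}: fix local coordinates $(x^i)$ with $x^i(\bp_0)=0$ and a local $(-,-)_C$-orthonormal frame $(\be_\alpha(x))$ that is synchronous at $\bp_0$, i.e. $\nabla^C\be_\alpha|_{\bp_0}=0$. Writing $\bu=\sum_\alpha u_\alpha(x)\be_\alpha(x)$ with $u_\alpha(x)=(\bu(x),\be_\alpha(x))_C$, the value $\bu(\bp_0)$ is the image under the frame of the Gaussian vector $(u_\alpha(0))_{\alpha}$. On the other hand $\nabla^C\bu=\sum_\alpha (du_\alpha)\be_\alpha+\sum_\alpha u_\alpha\nabla^C\be_\alpha$, and synchronicity kills the second sum at $\bp_0$, so $\nabla^C\bu(\bp_0)=\sum_{\alpha,i}\pa_{x^i}u_\alpha(0)\,dx^i\otimes\be_\alpha(\bp_0)$; that is, $\nabla^C\bu(\bp_0)$ is determined by the Gaussian vector $(\pa_{x^i}u_\alpha(0))_{\alpha,i}$.

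The cross-covariance is then computed directly. Interchanging $\bsE$ with differentiation (legitimate for a smooth Gaussian field) and using $T_{\alpha\beta}(x,y)=\bsE(u_\alpha(x)u_\beta(y))$, I get $\bsE\bigl(u_\alpha(0)\,\pa_{x^i}u_\beta(0)\bigr)=\bsE\bigl(\pa_{x^i}u_\beta(0)\,u_\alpha(0)\bigr)=-\Gamma_{\beta\alpha|i}(0)$, where the last equality is (\ref{gamai_sto}) with the roles of $\alpha,\beta$ exchanged. Since the frame is synchronous at $\bp_0$ we have $\Gamma_i(0)=0$ for every $i$, so every such cross-covariance vanishes. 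Hence $\bu(\bp_0)$ and $\nabla^C\bu(\bp_0)$ are uncorrelated.

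The one point that needs care — and the only genuine obstacle — is the joint Gaussianity of the combined family $(u_\alpha(0),\pa_{x^i}u_\beta(0))$. Proposition \ref{prop: gauss_vec} handles the evaluation variables, but not the derivative variables; for those I would observe that $\vfi\mapsto\bigl(\vfi(\bp_0),\nabla^C\vfi(\bp_0)\bigr)$ is a continuous linear map from $C^\infty(E)$ to a finite-dimensional vector space, so the pushforward of the Gaussian measure $\bGamma$ under it is a Gaussian vector, which makes the two blocks jointly Gaussian. Once joint Gaussianity is established, the vanishing of the cross-covariance matrix computed above upgrades ``uncorrelated'' to ``independent'', completing the argument.
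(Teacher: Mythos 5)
Your proposal is correct and follows essentially the same route as the paper's own proof: fix a $(-,-)_C$-orthonormal frame synchronous at $\bp_0$, note that $\bigl(\bu(\bp_0),\nabla^C\bu(\bp_0)\bigr)$ is jointly Gaussian, and observe that the cross-covariances $\bsE\bigl(u_\beta(0)\,\pa_{x^i}u_\alpha(0)\bigr)$ are exactly the connection coefficients $-\Gamma_{\alpha\beta|i}(0)$ of (\ref{gamai_sto}), which vanish by synchronicity. The only difference is cosmetic: you supply a justification for the joint Gaussianity (pushforward under a continuous linear map), a point the paper simply asserts.
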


\begin{proof} We continue to use the same notations as in the proof of Proposition \ref{prop: gauss}.  Observe first that 
\[
\bigl(\,\bu(\bp_0),\nabla\bu(\bp_0)\,\bigr)\in E_{\bp_0}\oplus  E_{\bp_0}\otimes T_{\bp_0}^* M,
\]
is a Gaussian random vector. The section $\bu$ has the local description
\[
\bu(x)=\sum_\beta u_\alpha(x)\be_\beta(x).
\]
Then
\[
\nabla^C_{x^i}\bu(\bp_0)=\sum_\alpha \pa_{x^i} u_\alpha(0)\be_\alpha(0),\;\; 0=\Gamma_{\alpha\beta|i}(x) \stackrel{(\ref{gamai_sto})}{=} -\bsE\bigl(\,\pa_{x^i}u_\alpha(0)u_\beta(0)\,\bigr).
\]
Since $(u_\beta (0),\pa_{x^i} u_\alpha(0)\,)$ is a Gaussian vector, we deduce that the random variables $u_\beta (0),\pa_{x^i} u_\alpha(0)$ are independent.
\end{proof}

\begin{remark}  The local definition of  the connection coefficients  $\Gamma_i$  shows that the above   independence result is a special  case  of a well known fact in the theory of Gaussian   random vectors:  if $X,Y$ are finite dimensional Gaussian vectors such that  the direct sum $X\oplus Y$ is also Gaussian,  then for a certain deterministic   linear operator  $A$ the random vector $X-AY$ is independent of $Y$; see e.g. \cite[Prop. 1.2]{AzWs}. More precisely,  this happens when
\[
A=\cov(X,Y)\cdot \cov(Y)^{-1}.
\]
Corollary \ref{cor: indep} follows from this fact applied in the special case  $X=d\bu(0)$ and $Y=\bu(0)$.  

If we use local coordinates  $(x^i)$ and a local orthonormal frame  $(\be_\alpha)$ in a neighborhood $\eO$, then we can view $\eO$ as an open subset $\bR^m$ and  $\bu$ as a   map   $\bu:\eO\to\bR^r$. As such, it has a differential $d\bu(x)$ at any $x\in \eO$. The   formula (\ref{gamai_sto})  defining the coefficients of the correlator connection $\nabla^C$  and the classical regression formula \cite[Prop.1.2]{AzWs} yield the following  a.s. equality:  for any point $x\in\eO$  we have 
\begin{equation}
\nabla^C\bu(x) =d\bu(x)-\bsE\bigl( d\bu(x)\,\vert\; \bu(x)\,\bigr).
\label{regress}
\end{equation}
Above, the notation $\bsE(\,\mathbf{var}\;|\; \mathbf{cond}\;)$ stands for the   conditional expectation  of the variable $\mathbf{var}$ given the conditions $\mathbf{cond}$.  The  above equality   implies immediately that  the random vectors $d\bu(x)$ and $\bu(x)$ are independent.

We want to mention that (\ref{regress}) is a special case of the probabilistic description in   \cite[Prop.1.13]{ELL}.
\qed
\end{remark}

 \section{Kac-Rice implies Gauss-Bonnet-Chern}
 \label{s: sgb}
 \setcounter{equation}{0}
 
 In this section    we will prove  a refined Gauss-Bonnet-Chern equality     involving   a  nondegenerate Gaussian  ensembles  of smooth sections of $E$.  We will   make the following additional assumption.
 
 \begin{itemize}
 
 \item The manifold $M$ is oriented.
 
 \item     The bundle $E$  is oriented and   its rank  is even, $r=2h$.
 
 \item $r\leq m=\dim M$.
 
 \end{itemize}

 \subsection{The setup}  We  denote by $\Omega_k(M)$ the space of $k$-dimensional currents, i.e., the space of  linear maps $\Omega^k(M)\to \bR$ that are continuous with respect to the natural locally convex topology on the space of smooth $k$-forms on $M$. If $C$ is a $k$-current and $\eta$ is a smooth $k$-form, then we denote by $\lan \eta, C\ran$ the value of $C$ at $\eta$.
 
 Suppose that we are given a metric on $E$ and a   connection $\nabla$ compatible with the metric.  Observe that if $\bu:M\to E$    is a smooth section of $E$ transversal to the zero section,  then its zero set $Z_{\bu}$ is a  smooth codimension $r$ submanifold of $M$ and there  is a canonical adjunction isomorphism 
 \[
  \mathfrak{a}_\bu:  T_{Z_\bu}M\to E|_{Z_\bu},
  \]
  where  $T_{Z_\bu}M= TM|_{Z_\bu}/TZ_\bu$ is the normal bundle of$Z_\bu\hra M$. For more details about this map we refer to \cite[Exercise 8.3.21]{N1} or \cite[Sec.2]{NS}.    From the orientability of $M$ and $E$, and from the  adjunction induced isomorphism
  \[
 TM|_{Z_\bu}\cong  E|_{Z_\bu}\oplus TZ_\bu
 \]
 we deduce that $Z_\bu$  is equipped with a natural orientation      uniquely determined by the  equalities
 \[
  \ori (TM|_{Z_\bu})=  \ori(E|_{Z_\bu})\wedge \ori( TZ_\bu) \stackrel{(r\in2\bZ)}{=} \ori( TZ_\bu)\wedge  \ori(E|_{Z_\bu}).
  \]
  Thus, the zero set $Z_\bu$ with this induced orientation defines  an integration current $[Z_\bu]\in \Omega_{m-r}(M)$
  \[
   \Omega^{m-r}(M)\ni \eta\mapsto\lan \eta,[Z_\bu]\ran :=\int_{Z_\bu}\eta.
  \]
  To a metric $(-,-)$  on $E$   and a connection $\nabla$ compatible with the metric we can associate a closed  form
 \[
 \be(E,\nabla)\in \Omega^r(M).
 \]
Its construction involves  the concept of \emph{Pfaffian} discussed in great detail in Appendix \ref{s: pf} and it goes as follows.  

Denote by   $F$ the curvature of $\nabla$ and set
 \[
 \be(E,\nabla):=\frac{1}{(2\pi)^h} \pf(-F)\in \Omega^r(M),
 \]
  where the Pfaffian $\pf(-F)$ has  the following local description. Fix  a positively  oriented, local orthonormal frame $\be_1(x),\dotsc,\be_r(x)$ of $E$ defined on some open coordinate   neighborhood $\eO$ of $M$. Then $F|_{\eO}$ is described by a skew-symmetric $r\times  r$ matrix $(F_{\alpha\beta})_{1\leq \alpha,\beta\leq r}$, where
  \[
  F_{\alpha\beta}\in \Omega^2(\eO),\;\;\forall\alpha,\beta.
  \]
  If we denote by $\eS_r$ the group of permutations of $\{1,\dotsc, r=2h\}$, then
\begin{equation}
\pf\bigl(-F\bigr)=\frac{1}{2^h h!}\sum_{\si\in\eS_r}\eps(\si) F_{\si_1\si_2}\wedge \cdots \wedge F_{\si_{2h-1}\si_{2h}}\in\Omega^{2h}(\eO),
\label{-pf1}
\end{equation}
where $\eps(\si)$ denotes the signature of the permutation $\si\in\eS_r$. 

\begin{remark} The $r\times r$-matrix $(F_{\alpha\beta})$  depends on the choice of positively oriented local orthonormal frame $(\be_\alpha(x))$. However, the Pfaffian $\pf(-F)$ is a degree $r$-form on $\eO$  \emph{that is independent of the choice of positively oriented   local orthonormal frame}.\qed
\label{rem: gauge}
\end{remark}

As explained in \cite[Chap.8]{N1}, the degree $2h$-form $\be(E,\nabla)$ is closed and it is  called the  \emph{Euler form} of the  connection $E$.  Moreover, its DeRham cohomology class is independent of the choice of the metric connection $\nabla$.  The Euler form    defines an $(m-r)$-dimensional current
\[
\be(E,\nabla)^\dag\in \Omega_{m-r}(M),\;\;\Omega^{m-r}(M)\ni \eta \mapsto \lan\eta,\be(E,\nabla)^\dag\ran:=\int_M\eta\wedge \be(E,\nabla).
\]

\subsection{A stochastic Gauss-Bonnet-Chern theorem}  We can now state the main theorem of this paper.

\begin{theorem}[Stochastic Gauss-Bonnet-Chern]  Assume that the manifold $M$ is oriented, the bundle $E$ is oriented and has even rank $r=2h\leq m=\dim M$. Fix a   nondegenerate  Gaussian  ensemble  of smooth sections of $E$.  Denote by $\bu$ a random section of this ensemble, by $C$ the correlator  of this Gaussian ensemble,  by $(-,-)_C$ the metric on $E$ induced by $C$ and by $\nabla$ the connection  on $E$ determined by this correlator.   Then the expectation of the random $(m-r)$-dimensional current $[Z_\bu]$ is equal to the current $\be(E,\nabla)^\dag$, i.e., 
\begin{equation}
\bsE\bigl(\, \lan \eta, [Z_\bu]\ran\,\bigr)=\int_M\eta\wedge \be(E,\nabla),\;\;\forall \eta\in \Omega^{m-r}(M).
\label{sgb}
\end{equation}
\label{th: sgb}
\end{theorem}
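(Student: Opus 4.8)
The plan is to reduce the identity (\ref{sgb}) to the Kac--Rice formula of \cite{AzWs}. By the locality observation following (\ref{gbc2}) it suffices to treat a form $\eta$ supported in a coordinate neighborhood $\eO$ on which $E$ carries a positively oriented $(-,-)_C$-orthonormal frame $(\be_\alpha)_{1\le\alpha\le r}$. In this frame $\bu$ becomes an $\bR^r$-valued centered Gaussian field $(u_1,\dots,u_r)$ on $\eO$, and by (\ref{eq: cov_dens}) together with orthonormality its one-point covariance is the identity, so the Gaussian density of $\bu(x)$ at the origin equals $p_{\bu(x)}(0)=(2\pi)^{-r/2}=(2\pi)^{-h}$ for every $x\in\eO$. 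Since the ensemble is nondegenerate it is a.s.\ transversal by Proposition \ref{prop: transversal}, so $Z_\bu$ is a.s.\ a smooth oriented $(m-r)$-submanifold and $[Z_\bu]$ is well defined. Writing the adjunction orientation of $Z_\bu$ through the differential $d\bu$, the oriented integration current is the pulled-back delta-form $[Z_\bu]=\lim_{\ve\to0}\rho_\ve(\bu)\,du_1\wedge\cdots\wedge du_r$ for a smooth approximate identity $\rho_\ve$ at $0\in\bR^r$; equivalently $\int_{Z_\bu}\eta=\int_\eO \eta\wedge \delta(\bu)\,du_1\wedge\cdots\wedge du_r$, with the exact degree count $\deg\eta+r=m$. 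Taking expectations and invoking the (signed) Kac--Rice formula \cite{AzWs, AT} to justify exchanging $\bsE$ with $\int_\eO$ and the $\ve\to0$ concentration, the left-hand side of (\ref{sgb}) becomes
\[
\bsE\bigl(\lan\eta,[Z_\bu]\ran\bigr)=(2\pi)^{-h}\int_\eO \eta\wedge \bsE\bigl[\,du_1\wedge\cdots\wedge du_r\,\bigm|\,\bu(x)=0\,\bigr].
\]

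Next I remove the conditioning. By Corollary \ref{cor: indep} the random vectors $\bu(x)$ and $\nabla^C\bu(x)$ are independent, and the regression identity (\ref{regress}) reads $\nabla^C\bu(x)=d\bu(x)-\bsE(d\bu(x)\mid\bu(x))$ with the subtracted term linear in $\bu(x)$. Hence on the event $\bu(x)=0$ one has $d\bu(x)=\nabla^C\bu(x)$, and independence turns the conditional expectation into an unconditional one,
\[
\bsE\bigl[\,du_1\wedge\cdots\wedge du_r\,\bigm|\,\bu(x)=0\,\bigr]=\bsE\bigl[\,\nabla u_1\wedge\cdots\wedge\nabla u_r\,\bigr],
\]
where $\nabla u_\alpha$ is the $1$-form recording the $\alpha$-component of $\nabla^C\bu$. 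The $r$-form $\nabla u_1\wedge\cdots\wedge\nabla u_r$ is the image of $\wedge^r(\nabla^C\bu)$ under the orientation trivialization of $\det E$, hence independent of the chosen positively oriented orthonormal frame; this freedom lets me evaluate it at each point $x$ in a frame that is synchronous at $x$.

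The final step is a Wick (Isserlis) computation. Since the scalar coefficients $\nabla_i u_\alpha(x)$ are jointly centered Gaussian, expanding the wedge and pairing by Wick's theorem yields
\[
\bsE\bigl[\,\nabla u_1\wedge\cdots\wedge\nabla u_{2h}\,\bigr]=\pf\bigl(\,(\,\bsE[\nabla u_\alpha\wedge\nabla u_\beta]\,)_{\alpha\beta}\,\bigr),
\]
the pairing signs assembling exactly into the combinatorial Pfaffian (\ref{-pf1}). Only the part of $\bsE[\nabla_i u_\alpha\,\nabla_j u_\beta]$ antisymmetric in $(i,j)$ survives the wedge, and in a synchronous frame formula (\ref{F}) identifies it with the curvature, $\bsE[\nabla u_\alpha\wedge\nabla u_\beta]=F_{\alpha\beta}$. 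Thus the matrix of $2$-forms above is exactly the curvature matrix, whose Pfaffian is denoted $\pf(-F)$ in the convention (\ref{-pf1}); assembling the pieces gives $(2\pi)^{-h}\int_\eO\eta\wedge\pf(-F)=\int_M\eta\wedge\be(E,\nabla)$, which is (\ref{sgb}).

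I expect the main obstacle to be the rigorous justification of the signed Kac--Rice reduction in the first paragraph: interchanging expectation with the spatial integral, controlling the $\ve\to0$ delta-concentration by dominated convergence, and verifying that the non-transversal locus contributes nothing (this is exactly where nondegeneracy, via Proposition \ref{prop: transversal}, is used). Matching the orientation conventions so that the co-area integrand is \emph{precisely} $\eta\wedge du_1\wedge\cdots\wedge du_r$ with the sign producing $\pf(-F)$ rather than $\pf(F)$ is the other delicate point; the Gaussian moment computation itself, though the conceptual heart, is routine once the independence of Corollary \ref{cor: indep} and the synchronous-frame identity (\ref{F}) are in hand.
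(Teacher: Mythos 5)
Your proposal is correct and follows essentially the same route as the paper's proof: localize to a coordinate patch with an oriented orthonormal frame, apply Kac--Rice, drop the conditioning on $\{\bu(x)=0\}$ via the independence of $\bu(x)$ and $\nabla^C\bu(x)$ (Corollary \ref{cor: indep}, used pointwise in a synchronous frame), and identify the resulting Gaussian expectation with $\ppf(-F)_{I_0}$ through (\ref{F}) --- your Wick-pairing evaluation of $\bsE\bigl[\nabla u_1\wedge\cdots\wedge\nabla u_{2h}\bigr]$ being equivalent to the paper's Adler--Taylor double-form computation (\ref{deta}). The one obstacle you flag is exactly where the paper's technical work lies: there is no off-the-shelf ``signed'' Kac--Rice formula in \cite{AzWs}, so the paper derives it from the unsigned Theorem \ref{th: kr} by proving the co-area identity $dx^{J_0}|_{Z_\bu}=\frac{\Delta_{I_0}(d\bu)}{J_\bu}\,dV_{Z_\bu}$ (Lemma \ref{lemma: coa}) and then extending Kac--Rice to the bounded but discontinuous weight $G(T)=\Delta_{I_0}(T)/\Jac_T$ by the ``admissibility'' dominated-convergence argument (boundedness coming from $\Jac_T^2=\sum_{|J|=r}\Delta_J(T)^2$), which is precisely the $\ve\to 0$ concentration and interchange-of-limits issue you anticipated.
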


\begin{proof}   The linearity in $\eta$ of (\ref{sgb}) shows that it suffices to prove this equality in the special case when  $\eta$ is  compactly supported on a coordinate neighborhood $\eO$   of a point $\bp_0\in M$. Fix coordinates   $x^1,\dotsc, x^m$ on $\eO$  with the following properties.

\begin{itemize}

\item $x^i(\bp_0)=0$, $\forall i=1,\dotsc, m$.

\item  The orientation of $M$ along $\eO$ is given by the top degree form $\omega_\eO:=dx^1\wedge \cdots \wedge dx^m$.

\end{itemize}

Invoking again the linearity in $\eta$ of (\ref{sgb})  we  deduce that it suffices to prove it in the special case  when $\eta$ has the form
\[
\eta=\eta_0 dx^{r+1}\wedge \cdots \wedge dx^m,\;\;\eta_0\in C^\infty_0(\eO).
\]
In other words, we have to prove the equality
\begin{equation}
\bsE\bigl(\,\bigr\lan\, \eta_0dx^{r+1}\wedge \cdots \wedge dx^m, [Z_\bu]\,\bigr\ran\,\bigr)=\int_\eO  \eta_0 dx^{r+1}\wedge \cdots \wedge dx^m\wedge \be(E,\nabla),\;\;\forall \eta_0\in C_0^\infty(\eO).
\label{sgb1}
\end{equation}
For any subset 
\[
I=\{i_1<\cdots <i_k\}\subset \{1,\dotsc, m\}
\]
we write $dx^I:= dx^{i_1}\wedge \cdots \wedge dx^{i_k}$. We set
\[
I_0:=\{1,\dotsc, r\},\;\;J_0:=\{r+1,\dotsc, m\}.
\]
We can rewrite (\ref{sgb1}) in the more compact form
\begin{equation}
\bsE\bigl(\,\bigr\lan\, \eta_0dx^{J_0}, [Z_\bu]\,\bigr\ran\,\bigr)=\int_\eO  \eta_0 dx^{J_0}\wedge \be(E,\nabla),\;\;\forall \eta_0\in C_0^\infty(\eO).
\label{sgb2}
\end{equation}

Fix a local, \emph{positively oriented}, $-(,-)_C$-orthonormal frame $(\be_\alpha(x))_{1\leq \alpha\leq r}$ of $E|_\eO$. The restriction to $\eO$ of the  curvature $F$ of $\nabla$ is then a skew-symmetric $r\times r$-matrix 
\[
F=(F_{\alpha\beta})_{1\leq \alpha,\beta\leq r},\;\; F_{\alpha\beta}\in\Omega^2(\eO),\;\;\forall \alpha,\beta.
\]
Each of the $2$-forms $F_{\alpha\beta}$ admits a unique decomposition
\[
F_{\alpha\beta}=\sum_{1\leq i<j\leq m} F_{\alpha\beta|ij} dx^i\wedge dx^j.
\]
For each subset   $I\subset \{1,\dotsc m\}$ we write
\[
F^I_{\alpha\beta}:= \sum_{\substack{ i<j\\ i,j\in I}} F_{\alpha\beta|ij} dx^i\wedge dx^j\in\Omega^2(\eO).
\]
We denote by $F^I$ the skew-symmetric $r\times r$ matrix with entries $F^I_{\alpha\beta}$.

The degree $r$ form $\pf(-F)$ admits a canonical decomposition
\[
\pf(-F)=\sum_{|I|=r}\pf\bigl(\,-F^I\,\bigr)=\sum_{|I|=r}\ppf(-F)_I \, dx^I,\;\; \ppf(-F)_I\in C^\infty(\eO).
\]
The equality (\ref{sgb2})  is then equivalent to the equality
\begin{equation}
\bsE\bigl(\,\bigr\lan\, \eta_0dx^{J_0}, [Z_\bu]\,\bigr\ran\,\bigr)=\frac{1}{(2\pi)^h}\int_\eO  \eta_0 \ppf(-F)_{I_0}\omega_\eO ,\;\;\forall \eta_0\in C_0^\infty(\eO),
\label{sgb3}
\end{equation}
where we recall that $\omega_\eO= dx^1\wedge \cdots \wedge dx^m$. To prove the above equality we will use  the following two-step strategy.

\medskip

\noindent {\bf Step 1.} Invoke the Kac-Rice formula to express  the left-hand side   of (\ref{sgb3}) as an integral over $\eO$
\[
\bsE\bigl(\,\bigr\lan\, \eta_0dx^{J_0}, [Z_\bu]\,\bigr\ran\,\bigr)=\int_\eO\eta_0(x) \rho(x) \omega_\eO,
\]
where $\rho(x)$ is a certain  smooth function on $\eO$.

\smallskip

\noindent {\bf Step 2.} Use the Gaussian computations  in  Appendix \ref{s: pf} to show that
\[
\rho(x)=\ppf(-F)_{I_0}(x),\;\;\forall x\in \eO.
\]

Let us now implement this strategy.   We view $\eO$ as an open neighborhood of the origin in $\bR^m$ equipped with   the canonical  Euclidean metric and the orientation  given by $\omega_\eO$ . Denote by $E_0$ the fiber of $E$ over the origin. Using the  oriented, orthonormal local frame $(\be_\alpha)$ we can view the restriction  to $\eO$ of the random section   $\bu$ as    smooth Gaussian random map 
\[
\bu: \eO\to E_0\cong \bR^r, \;\; x\mapsto (u_\alpha(x))_{1\leq \alpha\leq r},
\]
where again $\bR^r$ is equipped with   the canonical  Euclidean metric  and  orientation  given by  the volume form
\[
\omega_E= du_1\wedge \cdots \wedge du_r.
\]
The fact that the frame $(\,\be_\alpha(x)\,)$ is orthonormal with respect to the metric $(-,-)_C$     implies that for any $x\in\eO$  the probability distribution  of the random vector $\bu(x)$   is the standard  Gaussian measure on the Euclidean space $\bR^r$.  We denote by $p_{\bu(x)}$ the probability density of this vector so that
\begin{equation}
p_{\bu(x)}(y) =\frac{1}{(2\pi)^h} e^{-\frac{1}{2}|y|^2},\;\;y\in E_0\cong \bR^r,
\label{bu_prob}
\end{equation}
where $|-|$ denotes the  canonical Euclidean norm on $\bR^r$, $h=r/2$. 

The zero set $Z_\bu$ is a.s. a  submanifold of  $\eO$ and, as such, it is equipped with  an induced  Riemann metric with  associated volume density $|dV_{Z_{\bu}}|$.  

Recall that if $T: U\to V$ is a linear map between two Euclidean spaces  such that $\dim U\geq \dim V$, then  its \emph{Jacobian} is the scalar
\[
\Jac_T:=\sqrt{\det (TT^*)}.
\]
We define  the  \emph{Jacobian  at $x\in \eO$} of a smooth map $F:\eO\to E_0$ to be the scalar
\[
J_F(x)=\Jac_{dF(x)}=\sqrt{\det d F(x) dF(x)^*},
\]
where $dF(x):\bR^m\to E_0$ is   the differential $dF(x)$ of $F$ at $x$. We set 
\[
\eT:=\Hom(\bR^m,E_0),
\]
 so that   have a  Gaussian random  map
\[
d\bu: \eO\to \eT,  \;\;x\mapsto d\bu(x).
\]
This random map is a.s. smooth. The  random map 
\[
\eO\to E_0\times \eT,\;\;x\mapsto\bigl ( \bu(x), d\bu(x)\,\bigr),
\]
is also a Gaussian random map.   We  have the following Kac-Rice formula, \cite[Thm. 6.4,  6.10]{AzWs}.

\begin{theorem}[Kac-Rice] Let $g: \eT\to\bR$ be a bounded continuous function. Then, for any  $\lambda_0\in C_0(\eO)$, the random variable
\[
\bu\mapsto \int_{Z_\bu} \lambda_0(x) g\bigl(\,d\bu(x)\,\bigr) |dV_{Z_\bu}(x)|\
\]
is integrable  and 
\begin{subequations}
\begin{equation}
\bsE\left(\int_{Z_\bu} \lambda_0(x) g\bigl(\,d\bu(x)\,\bigr) |dV_{Z_\bu}(x)|\,\right)=\int_\eO\lambda_0(x)\bw(x) \omega_\eO(x),\;\;\forall \lambda_0\in C_0(\eO),
\label{kra}
\end{equation}
\begin{equation}
\begin{split}
\bw(x)=\bsE\Bigl(\, J_\bu(x) g(\,d\bu(x)\,)\,\bigr|\; \bu(x)=0\,\Bigr)p_{\bu(x)}(0)\\ \stackrel{(\ref{bu_prob})}{=}\frac{1}{(2\pi)^h}\bsE\Bigl(\, J_\bu(x) g(\,d\bu(x)\,)\,\bigr|\; \bu(x)=0\,\Bigr).
\end{split}
\label{krb}
\end{equation}
\end{subequations}
 In particular, the function $x\mapsto \lambda_0(x) \rho(x)$ is also integrable.
\label{th: kr}
\end{theorem}

The above  equality  extends  to more general  functions $g$.

\begin{definition} We say that a  bounded  measurable function $g:\eT\to\bR$ is \emph{admissible}  if  there exists a  sequence of  bounded continuous functions $g_n:\eT\to\bR$ with the following properties.

\begin{enumerate}
\item The sequence $g_n$ converges a.e. to $g$.
\item  $\sup_n\Vert g_n\Vert_{L^\infty}<\infty$.\qed
\end{enumerate}
\label{def: adm}
\end{definition}

\begin{lemma}   Theorem \ref{th: kr} continues to hold if $g$ is an admissible  function $\eT\to\bR$.
\end{lemma}

\begin{proof} Fix an admissible  function $g:\eT\to\bR$ and a  sequence  of  bounded measurable functions $g_n:\eT\to\bR$ satisfying the  conditions in Definition \ref{def: adm}.  Set $K:= \sup_n\Vert g_n\Vert_{L^\infty}$. Then
\[
\left|\int_{Z_\bu} \lambda_0(x) g_n\bigl(\,d\bu(x)\,\bigr) |dV_{Z_\bu}(x)|\,\right|\leq  K\int_{Z_\bu} |\lambda_0(x)|  |dV_{Z_\bu}(x)|.
\]
The random variable 
\[
\bu\mapsto K \int_{Z_\bu} |\lambda_0(x)|  |dV_{Z_\bu}(x)| 
\]
is integrable according to the Theorem \ref{th: kr} in the special case  $g\equiv K$ and $\lambda_0=|\lambda_0|$.  The dominated converge theorem implies that
\[
\lim_{n\to\infty} \bsE\left(\int_{Z_\bu} \lambda_0(x) g_n\bigl(\,d\bu(x)\,\bigr) |dV_{Z_\bu}(x)|\,\right)=\bsE\left(\int_{Z_\bu} \lambda_0(x) g\bigl(\,d\bu(x)\,\bigr) |dV_{Z_\bu}(x)|\,\right).
\]
A similar argument shows that
\[
\lim{n\to\infty}\bsE\Bigl(\, J_\bu(x) g(\,d\bu(x)\,)\,\bigr|\; \bu(x)=0\,\Bigr)=\bsE\Bigl(\, J_\bu(x) g(\,d\bu(x)\,)\,\bigr|\; \bu(x)=0\,\Bigr).
\]
\end{proof}

To apply  the  above Kac-Rice formula  we need to express the integral over $Z_\bu$ of a form as an integral of a function with respect to the volume density. More precisely, we seek an equality of the type
\[
\int_{Z_\bu} dx^{J_0} =\int_{Z_\bu}  \eta_0(x) g\bigl(\, d\bu(x)\,\bigr) |dV_{Z_\bu}(x)|,
\]
for some admissible  function $g$. This is achieved in the following technical result whose proof can be found in Appendix \ref{s: tech}.

\begin{lemma}\label{lemma: coa} Suppose that $0$ is a regular value of $\bu$. Set  $u_\alpha(x)= (\bu,\be_\alpha(x)\,)$. Then
\[
dx^{J_0}|_{Z_\bu}=\frac{\Delta_{I_0}(d\bu)}{J_\bu}d V_{Z_u},
\]
 where $J_\bu:\eO\to\bR_{\geq 0}$ is the  Jacobian of $\bu$ and $\Delta_{I_0}(d\bu)$ is the determinant of  the  $r\times r$ matrix $\frac{\pa \bu}{\pa x^{I_0}}$ with entries
  \[
  \frac{\pa u_\alpha}{\pa x^j},\;\;\alpha,j\in I_0.
  \]
\end{lemma}

Any linear map $T\in \eT=\Hom(\bR^m,E_0)$ is represented by an $r\times m$ matrix. For any   subset $J$ of $\{ 1,\dotsc, m\}$ we denote by  $\Delta_J(T)$ the determinant of  the $r\times r$ minor  $T_J$ determined by the columns indexed by $J$. 

Denote by $\eT_*$ the subset of $\eT$ consisting of  surjective linear maps $\bR^m\to E_0$. The complement $\eT\setminus \eT_*$ is a negligible subset of $\eT$.   Observe that $T\in\eT^*\Llra \Jac_T\neq 0$. Define
\[
G: \eT\to \bR,\;\; g(T)=\begin{cases}
\frac{\Delta_{I_0}(T)}{\Jac_T},& T\in \eT_*,\\
0, & T\in \eT\setminus \eT_*.
\end{cases} 
\]
 Lemma \ref{lemma: coa}  shows that that if $0$ is a regular value of $\bu$, then
\[
\int_{Z_\bu} \eta_0 dx^{J_0}= \int_{Z_\bu}  \eta_0(x) G(d\bu(x)) |dV_{Z_\bu}(x)|,\;\;\forall \eta_0\in C_0(\eO).
\]

\begin{lemma}  The measurable function  $G: \eT\to \bR$ is admissible.
\end{lemma}

\begin{proof} We first prove that $G$ is bounded on $\eT_*$. This follows from the classical identity
\[
\Jac_T^2=\sum_{|J|=r}\Delta_{J}(T)^2.
\]
This proves that
\[
\left| \frac{\Delta_{I_0}(T)}{\Jac_T}\right|\leq 1.
\]
Now define 
\[
G_n( T):=\frac{\Delta_{I_0}(T)}{ \sqrt{n^{-2}+\Jac_T^2}},\;\;\forall T\in \eT.
\]
Observe that $G_n(T)\nearrow G(T)$  for  $T\in \eT_*$ as $n\to\infty$ and $\sup_n\Vert G_n\Vert_{L^\infty}\leq 1$.
\end{proof}

We  deduce that
\[
\bsE\Bigl(\, \bigl\lan\, \eta_0(x) dx^{J^0}, [Z_\bu]\,\bigl)=\bsE\left(\int_{Z_\bu} \eta_0(x) G\bigl(\,d\bu(x)\,\bigr) |dV_{Z_\bu}(x)|\,\right)
\]
\[
\stackrel{(\ref{kra})}{=}\frac{1}{(2\pi)^h}\int_\eO \eta_0(x)\bsE\Bigl(\,J_\bu(\bx) G(\,d\bu(x)\,)\; \Bigl|\;\bu(x)=0\,\Bigr) \omega_\eO
\]
\[
=\frac{1}{(2\pi)^h}\int_\eO \eta_0(x)\,\underbrace{\bsE\Bigl(\,\Delta_{I_0}\bigl(\,d\bu(x)\,\bigr)\;\Bigl|\; \bu(x)=0\,\Bigr)}_{=:\rho(x)} \omega_\eO.
\]
We have thus proved  the equality
\begin{equation}
\bsE\Bigl(\, \bigl\lan\, \eta_0(x) dx^{J^0}\, , \, [Z_\bu]\,\bigr\ran\,\Bigl)=\frac{1}{(2\pi)^h}\int_\eO\rho(x) \eta_0(x) \omega_\eO,\;\;\forall \eta_0\in C_0^\infty(\eO).
\label{krgb}
\end{equation}
The density $\rho(x)$ in the right-hand-side of the above equality could a priori depend on the choice of the $(-,-)_C$-orthonormal frame   because it involved the frame dependent matrix $\frac{\pa \bu}{\pa x^{I_0}}$. On the other hand, the left-hand-side of the equality (\ref{krgb}) is plainly frame independent. This shows that the density   $\rho$ is also frame  independent.  To prove  (\ref{sgb3}) and thus Theorem \ref{th: sgb} it suffices to show that
\begin{equation}
\bsE\Bigl(\,\Delta_{I_0}\bigl(\,d\bu(x)\,\bigr)\;\Bigl|\; \bu(x)=0\,\Bigr)= \ppf(-F)_{I_0}(x),\;\;\forall x\in\eO.
\label{sgb4}
\end{equation}
We will prove the above equality for $x=0$.   Both sides are frame invariant  and  thus we are free to choose the frame $(\,\be_\alpha(x)\,)$ as we please.  We assume that it is synchronous at $x=0$, i.e.,
\[
\nabla\be_\alpha(0)=0,\;\;\forall \alpha.
\]
Then $\nabla^C\bu(0)=d\bu(0)$. Corollary \ref{cor: indep}  now implies that the  Gaussian vectors $d\bu(0)$ and $\bu(0)$ are independent. Hence 
\[
\bsE\Bigl(\,\Delta_{I_0}\bigl(\,d\bu(0)\,\bigr)\;\Bigl|\; \bu(0)=0\,\Bigr)=\bsE\Bigl(\,\Delta_{I_0}\bigl(\,d\bu(0)\,\bigr)\,\Bigr),
\]
and thus  we have to prove that
\begin{equation}
\bsE\Bigl(\,\Delta_{I_0}\bigl(\,d\bu(0)\,\bigr)\,\Bigr)= \ppf(-F)_{I_0}(0).
\label{sgb5}
\end{equation}
The random variable $\Delta_{I_0}\bigl(\,d\bu(0)\,\bigr)$ is the determinant of the $r\times r$ Gaussian matrix $S:=\frac{\pa\bu}{\pa x^{I_0}}$  with entries
\begin{equation}\label{S_random}
S_{\alpha i}:=\pa_{x^i} u_\alpha (0),\;\;1\leq \alpha,i\leq r.
\end{equation}
Its statistics are determined by the covariances
\begin{equation}\label{cov_nabla}
K_{\alpha i|\beta j}:= \bsE\bigl(\, S_{\alpha i} S_{\beta j} \,\bigr)=\bsE\bigl(\, \pa_{x^i} u_\alpha (0)\pa_{x^j} u_\beta (0)\,\bigr).
\end{equation}
As in Appendix \ref{s: pf}, we  consider  the  $(2,2)$-double-form
  \[
  \bXi_K=\sum_{\alpha<\beta,\;i<j} \bXi_{\alpha\beta|ij} \bv^\alpha\wedge \bv^\beta\otimes\bv^i\wedge \bv^j\in \Lambda^{2,2}\bsV^*,
  \]
  where
  \[
  \bXi_{\alpha\beta|ij}:=\bigl( K_{\alpha i|\beta j}- K_{\alpha j|\beta i}\,\bigr),\;\;\forall 1\leq \alpha,\beta\leq r,\;\;1\leq i,j\in I_0.
  \]
Then
\begin{equation}
\bsE\Bigl(\,\Delta_{I_0}\bigl(\,d\bu(0)\,\bigr)\,\Bigr)\stackrel{(\ref{deta})}= \frac{1}{h!}  \tr \bXi_K^{\wedge h}.
\label{sgb6}
\end{equation}
Now observe that (\ref{F}) implies that
\[
\bXi_{\alpha\beta|ij}=F_{\alpha\beta|ij}(0)=\forall 1\leq \alpha,\beta\leq r,\;\;1\leq i,j\in I_0.
\]
We deduce that
\[
\bXi_K= \Omega_{-F^{I_0}(0)} \stackrel{(\ref{berez})}{:=} \sum_{\substack{\alpha<\beta,\;
i<j,\\ i,j\in I_0}} F_{\alpha\beta|ij} du_\alpha\wedge du_\beta\otimes dx^i\wedge dx^j.
\]
Using (\ref{pf3}) and  (\ref{pf_ber1}) we deduce
\begin{equation}\label{last}
\begin{split}
\pf(-F^{I_0})_{x=0}= \ppf(-F)_{I_0}(0) \,dx^{I_0}= \frac{1}{h!}\Bigl(\, \tr \Omega_{-F^{I_0}(0)}^{\wedge h}\,\Bigr) dx^{I_0}\\
=\frac{1}{h!} \Bigl(\, \tr \bXi_K^{\wedge h}\,\Bigr) dx^{I_0}\stackrel{(\ref{sgb6})}{=}\bsE\Bigl(\,\Delta_{I_0}\bigl(\,d\bu(0)\,\bigr)\,\Bigr)dx^{I_0}.
\end{split}
\end{equation}
This proves (\ref{sgb5}) and thus completes the proof of Theorem \ref{th: sgb}.
\end{proof}

\begin{remark}\label{rem: non} (a) When  the  rank of $E$ is odd, the \emph{topological} Euler  class with real coefficients is trivial, \cite[Thm. 8.3.17]{N1}.    In this case, if $\bu$ is a section of $E$ transversal to the zero  section, then we have the  equality of currents $[Z_{-\bu}]=-[Z_{\bu}]$. If  $\bu$ is a random section of a  smooth, nondegenerate Gaussian ensemble, then the above equality implies $\bsE([Z_{\bu}])=0$.

 (b)  Theorem \ref{th: sgb} deals with  \emph{centered} Gaussian ensembles of smooth sections of $E$.  However its proof can be easily modified to produce results for \emph{noncentered} ensembles as well. 

Suppose that  $\bu$ is a centered nondegenerate random Gaussian smooth section of $E$ with associated induced metric $h$ and   connection $\nabla$ as in Theorem \ref{th: sgb}.  Fix a smooth section $\bu_0$ of  $E$ and form the noncentered Gaussian   random section  $\bv=\bu_0+\bu$.  Then  $\bv$ is  a.s. transversal to the  zero section and we obtain a  random current $[Z_\bv]$.  

Fix $\bp\in M$  and   define  the spaces of \emph{mixed} double-forms
\[
\Lambda^{j,k}(T^*_\bp M, E_\bp):=\Lambda^j  T^*_\bp M\otimes  \Lambda^k E_\bp.
\]
As in Appendix  \ref{s:  pf}, we  have a natural  associative multiplication
\[
\wedge: \Lambda^{j,k}(T^*_\bp M, E_\bp)\otimes \Lambda^{j',k'}(T^*_\bp M, E_\bp)\to \Lambda^{j+j',j+k'}(T^*_\bp M, E_\bp).
\] 
We   have the mixed  double-forms
\[
\nabla \bu_0(\bp)\in   C^\infty\bigl(\,\Lambda^{1,1}(T^* M, E)\,\Bigr),\;\;F(\nabla)\in C^\infty\bigl(\,\Lambda^{2,2} (T^*M, E)\,\bigl).
\]
Fix a point $\bp\in M$.   Observe that the metric  and orientation on $E_p$ canonically  determine a unit vector  $\omega_{E_\bp}$  of  the top exterior power  $\Lambda^r E^*_\bp$, $r=2h$. The canonical map $\beta:\Lambda^r E_\bp\otimes \Lambda^r E^*_\bp\to\bR$ is an isomorphism, and we obtain   natural  contractions
\[
\Lambda^r E_\bp \to\bR,\;\;\vfi\mapsto\vfi\inpr \omega_{E_\bp}:=\beta\bigl(\,\vfi\otimes \omega_{E_\bp}\,\bigr) \in \bR,
\]
\[
\Lambda^{j,r}(T^*_\bp M, E_\bp)\ni A \mapsto  A\inpr \omega_{E_\bp}\in\Lambda^j T^*_\bp M.
\]
Now define 
\[
\pf(-F(\nabla),\bu_0)_\bp:=\sum_{j=0}^h \frac{(-1)^j}{(2h-2j)! j!} \Bigl( \bigl(\nabla \bu_0(\bp)\bigr)^{\wedge (2h-2j)} \wedge  F(\nabla)^{\wedge j}  \Bigr)\inpr \omega_{E_\bp}\in \Lambda^{2h} T^*_\bp M.
\]
Note that $\pf(-F(\nabla))=\pf(-F(\nabla), \bu_0)_{\bu_0=0}$. The proof of Theorem \ref{th: sgb}  shows  that  
\begin{equation}\label{sgbunc}
\bsE\bigl(\, \lan \eta, [Z_\bv]\ran\,\bigr)= \frac{1}{(2\pi)^h}\int_M \pf(-F(\nabla),\bu_0)\wedge \eta,\;\;\forall \eta\in \Omega^{m-r}(M).
\end{equation}
Indeed, the only modification in the proof  appears  when we   consider the random matrix (\ref{S_random}),
\[
S=\Bigl(\,\frac{\pa v^\alpha}{\pa x^i}(0)\,\Bigr)_{\substack{1\leq\alpha\leq r\\ i\in I_0}},\;\;I_0=\{1,\dotsc, r\}.
\]
In this case $S$  is  no longer a  \emph{centered} Gaussian random matrix.   Its expectation is 
\[
\bsE( S)=\nabla_{I_0} \bu_0(0):=\sum_{i\in I_0} dx^i\otimes\nabla_{x^i}\bu_0(0),
\]
   while its covariances are still  given by  (\ref{cov_nabla}).   The  equality  (\ref{sgbunc})  now follows  by using the same argument as in the last part of the proof of Theorem \ref{th: sgb},  with one notable difference: in the equality (\ref{sgb6})   we must invoke  the (\ref{detamu})  with  $\mu=\nabla_{I_0}\bu_0(0)$.

 \qed
\end{remark}
  
\appendix

\section{Proofs of various technical results}
\label{s: tech}
\setcounter{equation}{0}

  \noindent{\bf Proof of Proposition \ref{prop: assm}.}   Fix a metric $g$ on $M$, a metric and  a compatible connection on $E$.  For each nonnegative integer  $k$ we can define   the Sobolev spaces $\eH_k$  consisting of $L^2$-sections of $E$ whose generalized   derivatives up to order $k$ are $L^2$-sections.  We have a decreasing  sequence of Hilbert spaces   $\eH_0\supset \eH_1\supset  \cdots$  whose intersection is $C^\infty(E)$.  For $k\geq 0$ we denote by $\eH_{-k}$ the topological dual of  $\eH_k$ so that we have  a decreasing family of Hilbert spaces $\cdots \subset \eH_1\subset \eH_0\subset \eH_{-1}\subset \cdots.$
 
%The union  of this family of spaces is $C^{-\infty}(E)$,  and the strong topology on $C^{-\infty}(E)$ is the locally convex inductive limit of this  family.  
 
 The results in \cite{Fer} show  that each  of the  subsets $\eH_k\subset C^{-\infty}(E)$, $k\in\bZ$, is a Borel subset. Using  Minlos's theorem \cite[Sec.4, Thm.2]{Min} we  deduce that if the covariance  kernel $C_\bGamma$ is smooth, then $\bGamma(\eH_k)=1$, $\forall  k\in\bZ$. \qed

\noindent {\bf Proof of Proposition \ref{prop: gauss_vec}.} Fix a Riemann metric $g$ on $M$. For each $i=1,\dotsc, n$  choose a  sequence $(\delta_{\nu,i})_{\nu\geq 0}$ of  smooth functions on $M$  supported in a coordinate neighborhood of $\bx_i$ such that
\[
\lim_{\nu\to \infty} \delta_{\nu,i}|dV_g|=\delta_{\bx_i}=\mbox{the Dirac measure concentrated at  $x_i$} .
\]
Fix trivializations of $E$ near each $\bx_i$. Let $t_1,\dotsc, t_n$.  Now define 
\[
\Phi_\nu=:\sum_{i=1}^n t_i\bu^*_i\otimes \delta_{\nu,i}|dV_g|\in C^\infty(E^*\otimes |\Lambda_M|),
\]
and  form the random variable $C^{-\infty} (E)\ni\vfi\mapsto Y_\nu=Y_\nu(\vfi)=L_{\Phi_\nu}(\vfi)$. This is a Gaussian random variable    with variance
 \[
 \bsE_\Gamma(Y_\nu^2) =\eK_\Gamma(\Phi_\nu,\Phi_\nu)=\sum_{i,j} t_it_j\int_{M\times M}C_{x,y}(\bu^*_i,\bu^*_j)\delta_{\nu,i}(x)\delta_{\nu,j}(y) |dV_g(x)dV_g(y)|.
 \]
 Now observe that
 \[
 \lim_{\nu\to\infty} Y_n(\vfi)=\sum_{i=1}^n t_i X_i(\vfi).
 \]
 We deduce that $Y_\nu$ converges in law to $\sum_{i=1}^nt_iX_i$. In particular, this random variable is Gaussian and its variance is
 \[
\lim_{\nu\to\infty}\bsE(Y_\nu^2)= \lim_{\nu\to\infty}\sum_{i,j} t_it_j\int_{M\times M}C_{x,y}(\bu^*_i,\bu^*_j)\delta_{\nu,i}(x)\delta_{\nu,j}(y) |dV_g(x)dV_g(y)|
\]
\[
=\sum_{i,j}t_it_j C_{\bx_i,\bx_j}(\bu^*_i,\bu^*_j).
  \]
 This completes the proof of  Proposition \ref{prop: gauss_vec}.\qed

\noindent {\bf Proof of Proposition \ref{prop: transversal}.}  Let us observe that   when $m=\dim M={\rm rank}\,E$, then  \cite[Prop. 6.5]{AzWs} shows  that any  nondegenerate Gaussian ensemble of smooth sections of $E$ is transversal.    We will reduce the general case to this special situation. 

The result is  certainly local   so it suffices to consider the case of  nondegenerate Gaussian random maps
\[
 F:\Omega \times B\to\bR^r,\;\;(\omega,\bx)\to  F(\omega, \bx)\in\bR^r,
 \]
 where $\Omega=(\Omega, \eA, \bsP)$ is a probability space, $B$ is the unit open ball in $\bR^m$ and $r<m$.  Throughout we assume that $F$ is a.s. $C^2$.  
 
 Fix  a \emph{finite dimensional} space $\bsV$ of smooth functions $B\to \bR^{m-r}$  satisfying the ampleness condition
  \[
  \forall \bx\in B, \;\; \spa\bigl\{\bv(\bx);\;\;\bv\in\bsV\bigr\}=\bR^{m-r}.
  \]
  Equip $\bsV$ with a nondegenerate Gaussian measure $\bGamma$.  Form  a new probability  space
 \[
 (\widehat{\Omega}, \hat{\eA},\widehat{\bsP}):=(\bsV, \eB, \bGamma)\otimes (\Omega, \eA, \bsP),
 \]
 where $\eB$ denotes the $\si$-algebra of Borel subsets of $\bsV$.   Denote by $\Pi_\Omega$ the natural projection $\widehat{\Omega}\to\Omega$. We consider  a new Gaussian random map
 \[
 \hat{F}:\widehat{\Omega}\times B \to\bR^{m-r}\oplus \bR^r=\bR^m,\;\; \hat{F}(\hat{\omega}, \bx):=  \bv(\bx)\oplus F(\omega,\bx),\;\;\forall \hat{\omega}=(\bv,\omega)\in\bsV\times \Omega.
 \]
 Clearly the random map $\widehat{F}$ is a.s. $C^2$ and nondegenerate.  We  denote by $\widehat{\Omega}_*$ the set of $\hat{\omega}\in \widehat{\Omega}$ such that the map $B\ni \bx \mapsto \hat{F}(\hat{\omega},\bx)\in\bR^m$ is $C^2$ and, for any $\bx\in B$, its differential
 \[
 D_\bx \hat{F}(\hat{\omega},-): T_\bx B\to\bR^m
 \]
is  bijective. The random  field $\hat{F}$ satisfies the assumptions in \cite[Prop. 6.5]{AzWs}  and thus  $\widehat{\bsP}(\widehat{\Omega}_*)=1$. 

If we denote by $\pi_r$ the natural projection $\bR^{m-r}\oplus \bR^r\to \bR^r$ we observe that
\[
\pi_rD_\bx \hat{F}(\hat{\omega},-)= D_\bx F(\omega, -): T_\bx B\to \bR^r.
\]
Hence,  if $\omega\in \Pi_\Omega(\widehat{\Omega})$,  the differential  $D_\bx F(\omega, -): T_\bx B\to \bR^r$ is onto for any $\bx\in B$. Clearly $\bsP\bigl(\, \Pi_\Omega(\widehat{\Omega})\,\bigr) =1$. This proves that the random map $F$ is transversal.\qed

 \noindent {\bf Proof of Lemma \ref{lemma: coa}.}   We follow a strategy similar to the one used  in the proof of \cite[Cor. 2.11]{Ncoarea}. Fix a point $p_0\in Z_{\bu}$. Now choose local coordinates $(t^1,\dotsc, t^m)$ on $\eO$ near $p_0$ and local coordinates $y^1,\dotsc, y^r$ on $E_0$ near $0\in E_0$      with the following properties.
  
  \begin{itemize}
  
  \item In the $(t,y)$-coordinates  the map $\bu$ is given  by the linear projection
  \[
  y^j=t^j, \;\; j=1,\dotsc, r.
  \]
  \item   The orientation of $E_0$ is given by $dy=dy^1\wedge \cdots\wedge dy^r$.
  \end{itemize}
  We set
  \[
   dt^{J_0}:= dt^{r+1}\wedge \cdots \wedge dt^r,\;\;  dt^{I_0}:=dt^1\wedge \cdots \wedge dt^r.
  \]
  The coordinates $t^{J_0}$ can be used as local coordinates  on $Z_\bu$ near $p_0$ and we assume that $dt^{J_0}$  defines the induced orientation of $Z_{\bu}$.  We can then write
  \begin{equation}
  \omega_\eO =\rho_\eO dt^{J_0}\wedge dt^{I_0},\;\; \omega_E = \rho_E  dy=\rho_E dy^1\wedge \cdots\wedge dy^r,\;\; dV_{Z_\bu}= \rho_\bu dt^{J_0},
  \label{coa0}
  \end{equation}
  where $\rho_\eO$, $\rho_E$ and $\rho_\bu$ are positive smooth functions on their respective domains. In the $t$-coordinates  we have 
\[
 dx^{J_0}=\lambda dt^{J_0}+\mbox{other exterior monomials}, 
 \]
  where $\lambda$ is the determinant  of  the $(m-r)\times (m-r)$ matrix $ \frac{\pa x^{J_0}}{\pa t^{J_0}}$  with entries $\frac{\pa x^i}{\pa t^j}$, $i,j\in J_0$. Thus
  \[
  dx^{J_0}|_{Z_\bu}=\lambda dt^{J_0}\stackrel{(\ref{coa0})}{=}\frac{\lambda}{\rho_\bu} dV_{Z_{\bu}}.
  \]
 We have 
  \begin{equation}
  dx^{J_0}\wedge \bu^* \omega_E=\vfi\omega_\eO,\;\; \vfi=\vfi=\Delta_{I_0}(d\bu)=\det\left(\frac{\pa\bu}{\pa x^{I_0}}\right).
  \label{coa1}
  \end{equation}
   On the other hand, 
  \begin{equation}
  dx^{J_0}\wedge \bu^*\omega_E= \rho_Edx^{J_0} \wedge dt^{I_0}\stackrel{(\ref{coa0})}{=} \lambda\rho_E dt^{J_0}\wedge dt^{I_0}=\frac{\lambda\rho_E}{\rho_\eO} \omega_\eO.
  \label{coa2}
  \end{equation}
  Using this in (\ref{coa1}) we deduce
  \begin{equation}
  \vfi=\frac{\lambda\rho_E}{\rho_\eO}.
  \label{coa3}
  \end{equation}
  Now observe that, along $Z_\bu$, we have 
  \[
  \frac{\vfi}{J_\bu} dV_{Z_\bu} \stackrel{(\ref{coa0})}{=}\frac{\vfi}{J_\bu}\rho_\bu dt^{J_0}\stackrel{(\ref{coa3})}{=}\frac{\lambda\rho_E\rho_\bu}{J_\bu\rho_\eO} dt^{J_0}=\frac{\rho_E\rho_\bu}{J_\bu\rho_\eO}dx^{J_0}|_{Z_\bu}.
  \]
  On the other hand,  \cite[Lemma 1.2]{Ncoarea} shows that $\frac{\rho_E\rho_\bu}{J_\bu\rho_\eO}=1$  which proves that
  \[
  dx^{J_0}|_{Z_\bu}=\frac{\vfi}{J_\bu}dV_{Z_\bu}\stackrel{(\ref{coa1})}{=}\frac{\Delta_{I_0}(d\bu)}{J_\bu}.
  \]
\qed
 
 \section{Pfaffians and Gaussian computations}
 \label{s:  pf}
 \setcounter{equation}{0}
 
We collect here a few  facts about Pfaffians   needed in the main body of the paper. 

 Fix a positive even integer $r=2h>0$. Given a commutative $\bR$-algebra $\eA$ we denote by $\Skew_r(\eA)$ the space of skew-symmetric $r\times r$-matrices with entries in $\eA$. The Pfaffian of a matrix $F\in \Skew_r(\eA)$ is a certain universal homogeneous  polynomial   of degree $h=r/2$ in the entries of $F$. More precisely, if we denote by $\eS_r$ the group of permutations of $\{1,\dotsc, r=2h\}$, then
\begin{equation}
\pf\bigl(F\bigr)=\frac{(-1)^h}{2^h h!}\sum_{\si\in\eS_r}\eps(\si) F_{\si_1\si_2} \cdots  F_{\si_{2h-1}\si_{2h}}\in\eA, 
\label{pf1}
\end{equation}
where $\eps(\si)$ denotes the signature of the permutation $\si\in\eS_r$.   The Pfaffian can be given an equivalent alternative description.

Fix an \emph{oriented}  real, $r$-dimensional  Euclidean space $E$ and  an \emph{oriented} orthonormal basis $e_1,\dotsc, e_r$ of $E$. Denote by $e^1,\dotsc, e^r$ the dual  basis  of $E^*$ and consider the  $\eA$-valued   $2$-form    
\begin{equation}
\Omega^E_F=-\sum_{1\leq \alpha <\beta} F_{\alpha\beta} \otimes e^\alpha\wedge e^\beta\in \eA\otimes \Lambda^2E^*,
\label{berez}
\end{equation}
  then the Pfaffian of $F$ is uniquely determined by the equality, \cite[Sec. 2.2.4]{N1},
\begin{equation}
\pf(F)e^1\wedge\cdots \wedge e^r =\frac{1}{h!}(\Omega_F^E)^{\wedge h}\in \eA\otimes \Lambda^{2h} E^*.
\label{pf_ber}
\end{equation}
We are interested only in a certain special case when 
\[
\eA=\Lambda^{\mathrm{even}}\bsV^*=\bigoplus_{2k\leq m} \Lambda^{2k} \bsV^*,
\]
where $\bsV$ is a real Euclidean space of dimension $m\geq r$ and 
\[
F_{\alpha\beta}\in \Lambda^2 \bsV^*,  \;\;\forall 1\leq \alpha,\beta\leq r.
\]
  In this case $\pf(F)\in\Lambda^r \bsV^*$  and has the following alternative description.

Fix  an \emph{orthonormal} basis $\{\bv_1,\dotsc,\bv_m)$ of $\bsV$.  For $1\leq \alpha_1,\alpha_2\leq r$ and $1\leq j_1,j_2\leq m$ we set
\begin{equation}
F_{\alpha_1\alpha_2|j_1j_2}:= F^E_{i_1i_2}(\bv_{j_1},\bv_{j_2}).
\label{not}
\end{equation}
Denote  by $\eS_r'$ the subset of $\eS_r$ consisting  of permutations  $(\si_1,\dotsc,\si_{2h})$ such that
\[
\si_1<\si_2,\;\si_3<\si_4,\;\dotsc ,\;\si_{2h-1}<\si_{2h}.
\]
Then
\begin{equation}
\pf\bigl(F\bigr)\bigl(\,\bv_1,\cdots,\bv_r\,\bigr)= \frac{(-1)^h}{h!}\sum_{\vfi,\si\in\eS_r'} \eps(\si\vfi)F_{\si_1\si_2|\vfi_1\vfi_2}\cdots F_{\si_{2h-1}\si_{2h}|\vfi_{2h-1}\vfi_{2h}}.
\label{pf2}
\end{equation}
For every subset $I=\{i_1<\cdots <i_r\} \subset \{1,\dotsc, m\}$ we write
\[
\bv^{\wedge I}                 =\bv^{i_1}\wedge \cdots \wedge \bv^{i_r},
\]
where $\{\bv^1,\dotsc,\bv^m\}$ is the orthonormal  basis of $\bsV^*$ dual to $\{\bv_1,\dotsc, \bv_m\}$. 
\[
\pf(F)=\sum_{|I|= r} \ppf(F)_I  \bv^{\wedge I}.
\]
For  an ordered multiindex $I$   we denote by $\bsV_I$ the   subspace spanned by $\bv_i$, $i\in I$, and by $F^I_{\alpha\beta}$ the restriction of $F_{\alpha\beta}$ to $\bsV_I$, i.e., 
\[
F^I_{\alpha\beta}=\sum_{\substack{i<j\\i,j\in I}}F^I_{\alpha\beta|ij} \bv^i\wedge \bv^j\in \Lambda^2 \bsV_I^*.
\]
We  denote by $F^I$ the $r\times r$  skew-symmetric matrix with entries $(F^I_{\alpha\beta})_{1\leq\alpha,\beta\leq r}$. Note that for any   subset $I\subset \{1,\dotsc, m\}$ of cardinality $r$ we have 
\begin{equation}
\ppf(F)_I\bv^I= \pf(F^I).
\label{pf3}
\end{equation}
This shows that the computation of the Pfaffians  reduces to the case when $\dim \bsV= r$.    This is what we will assume in the remainder of this section.   We fix an orthonormal basis  $\bv_1,\dotsc,\bv_r$ of $\bsV$ and we denote  by $\bv^1,\dotsc,\bv^r$ the dual basis of $\bsV^*$.

 To proceed further we need to introduce some more terminology.  A  \emph{double-form} on the above Euclidean space $\bsV$ is, by definition, an element of the vector space
\[
\Lambda^{p,q} \bsV^*:=\Lambda^p \bsV^*\otimes \Lambda^q \bsV^*,\;\;p,q\in\bZ_{\geq 0}.
\]
We have an associative product $\wedge: \Lambda^{p,q}\bsV^*\times \Lambda^{p',q'}\bsV^*\to \Lambda^{p+p',q+q'}\bsV^*$ given by
\[
(\omega\otimes \eta)\wedge (\omega'\otimes \eta'):=  (\omega\wedge \omega')\otimes (\eta\wedge \eta'),
\]
for any $\omega\in \Lambda^p\bsV^*$, $\eta\in \Lambda^q\bsV^*$, $\omega'\in \Lambda^{p'}\bsV^*$, $\eta'\in \Lambda^{q'}\bsV^*$. Observe that the metric on $\bsV$     produces an isomorphism
\[
\Lambda^{j,j}\bsV^*\cong \End\bigl(\,\Lambda^j \bsV^*,\Lambda^j \bsV^*\,\bigr),
\]
 and thus we have a well defined trace
 \[
 \tr:\Lambda^{j,j}\bsV^*\to\bR,\;\;\forall j=0,1,\dotsc, r.
 \]
 Observe that  an endomorphism $T$ of $\bsV$ can be identified with the $(1,1)$-double-form
 \[
 \omega _T=\sum_{1\leq \alpha,i\leq r} T_{\alpha i}\bv^\alpha\otimes \bv^i,\;\;T_{\alpha i}=(\bv_\alpha, T\bv_i)_{\bsV}.
 \]
 We then have the equality
 \begin{equation}
 \det T=\frac{1}{r!}\tr \omega_T^{\wedge r}.
 \label{det}
 \end{equation}
 Let us specialize  (\ref{berez}) to the case when $E=\bsV$ and $ e^\alpha=\bv^\alpha$. In particular, this implies that $\bsV$ is oriented by the volume form 
 \[
 \Omega_\bsV:=\bv^1\wedge\cdots\wedge\bv^r.
 \]
   If we write
 \begin{equation}
 \Omega_F=-\sum_{\alpha<\beta} F_{\alpha\beta}\otimes \bv^\alpha\wedge \bv^\beta,
 \label{berez1}
 \end{equation}
 then we observe that $\Omega_F\in \Lambda^{2,2}\bsV^{*,*}$,  and that the equality  (\ref{pf_ber}) can be rewritten in  the more compact form
 \begin{equation}
 \pf(F)=\frac{1}{h!}\bigl(\,\tr \Omega_F^{\wedge h}\,\bigr)\Omega_\bsV.
 \label{pf_ber1}
 \end{equation}
 
 As explained in \cite[\S12.3]{AT},  the formalism  of  double-forms and Pfaffians makes  its appearance in certain Gaussian computation. Suppose that $S$ is a random  Gaussian endomorphism of $\bsV$ with  entries
 \[
 S_{\alpha i}:=(\bv_\alpha, S\bv_i)_\bsV,\;\;\alpha,i=1,\dotsc,r,
 \]
 centered Gaussian random variables with covariances
 \[
 K_{\alpha i|\beta j}:= \bsE\bigl(\, S_{\alpha i} S_{\beta j}\,\bigr),\;\;\forall \alpha,\beta, i,j=1,\dotsc, r.
 \]
 We regard  $S$ as $(1,1)$-double-form 
 \[
 S=\sum_{\alpha, i}S_{\alpha i}\bv^\alpha\otimes\bv^i,
 \]
  and we get a random    $(r,r)$-double-form $S^{\wedge r}\in \Lambda^{r,r} \bsV^*$. Its expectation  can be given a very compact description.     Define the  $(2,2)$-double-form
  \[
  \bXi_K:=\sum_{\alpha<\beta,\;i<j} \bXi_{\alpha\beta|ij} \bv^\alpha\wedge \bv^\beta\otimes\bv^i\wedge \bv^j\in \Lambda^{2,2}\bsV^*,
  \]
  where $\bXi_{\alpha\beta|ij}:=\bigl( K_{\alpha i|\beta j}- K_{\alpha j|\beta i}\,\bigr)$, $\forall \alpha,\beta,i,j$.  Using \cite[Lemma 12.3.1]{AT}, the case $\mu=0$, we deduce
  \begin{equation}
 \frac{1}{r!}\bsE\bigl( \,S^{\wedge r}\,\bigr) =\frac{1}{h!}  \bXi_K^{\wedge h},\;\;\bsE\bigl( \,\det S\,\bigr)= \frac{1}{h!}  \tr \bXi_K^{\wedge h}.
 \label{deta}
 \end{equation}
 More generally, if $\mu\in \Lambda^{1,1} V^*$ is a  fixed (deterministic) $(1,1)$-double form,  then \cite[Lemma 12.3.1]{AT} shows that
 \begin{equation}\label{detamu}
 \bsE\bigl(\,\det(\mu +S)\,)=\frac{1}{r!}\tr\bsE\bigl(\,(\mu+S)^{\wedge 2h}\,\bigr)=\sum_{j=0}^h \frac{1}{(2h-2j)! j!} \tr\Bigl(\mu^{\wedge (2h-2j)}\wedge \bXi_K^{\wedge j}\Bigr).
 \end{equation}


\begin{thebibliography}{XXXXX}

\bibitem{AT} R. Adler, R.J.E. Taylor: {\sl  Random Fields and Geometry},  Springer Monographs in Math., Springer Verlag,  2007.



%\bibitem{AElw} A. Al-Hussein, K.D. Elworthy: {\sl Infinite-dimensional degree theory and stochastic analysis}, J. Fixed Point Th. Appl., {\bf 9}(2010).


\bibitem{AzWs} J.-M. Aza\"{i}s, M. Wschebor: {\sl  Level Sets and Extrema of Random Processes}, John Wiley \& Sons, 2009.

%\bibitem{BBG} P. B\'{e}rard, G. Besson, S. Gallot: {\sl   Embedding Riemannian manifolds by their heat kernel},  Geom. Gunct. Anal.,  {\bf 4}(1994),  373-398.

\bibitem{Bax} P. Baxendale: {\sl  Gaussian measures on function spaces},  Amer. J. Math. {\bf 98}(1976), 891-952.


\bibitem{Bog} V. I. Bogachev: {\sl  Gaussian Measures},  Mathematical Surveys and Monographs, vol. 62, Amer. Math., Soc., 1998.

\bibitem{BT} R. Bott, L.W Tu: {\sl Differential Forms in Algebraic Topology}, Graduate Texts in Math.,  vol. 82, Springer Verlag, 1982.

%\bibitem{Bruj} N.G. De Brujin: {\sl Asymptotic Methods in Analysis},  Dover Publications,  1981.

%\bibitem{Cha} I. Chavel: {\sl Eigenvalues in Riemannian Geometry},  Academic Press, 1984.

%\bibitem{Ch} S.S. Chern: \emph{A simple intrinsic proof of the Gauss-Bonnet formula for  closed Riemann manifolds}, Ann. Math, {\bf 45}(1946), p.747-752.

%\bibitem{DG} P. Deift,  D. Gioev: {\sl Random Matrix Theory: Invariant Ensembles and Universality},  Courant Lecture Notes, vol. 18, Amer. Math. Soc.,  2009.

%\bibitem{DR} G. De Rham: {\sl  Vari\'{e}t\'{e}s differentiable. Formes,  courants, formes harmoniques},  Hermann, Paris, 1960.

\bibitem{ELL} J.D.Elworthy, Y. LeJan,  X.-M. Lin: {\sl On the Geometry of Diffusion Operators and Stochastic Flows},   Lect. Notes in Math.,  vol. 1720, Springer Verlag, 1999.

%\bibitem{DeW} B. DeWitt: {\sl The Global Approach to Quantum Field Theory}, Oxford University Press, 2003.

%\bibitem{DV} J.J. Duistermaat, V.W. Guillemin: {\sl The spectrum of positive elliptic operators and periodic bicharacteristics}, Invent. Math., {\bf 29}(1975), 39-79.

%\bibitem{Fy} Y. V. Fyodorov: {\sl Complexity of random energy landscapes, glass transition, and absolute value of the spectral determinant of random matrices},  Phys. Rev. Lett,  {\bf 92}(2004), 240601; Erratum: {\bf 93}(2004), 149901.

%\bibitem{GGSh} I. M. Gelfand,  M. I. Graev, Z.Ya. Shapiro: {\sl Differential forms and integral geometry}. Funk. Anal. Priloj. {\bf 3}(1969), 24-40, Func. Anal. Appl. {\bf 3}(1969), 101-114.

%\bibitem{GeVi} I.M. Gelfand, G.E. Shilov: {\sl  Generalized Functions}, vol. 1, Academic Press, New York, 1964.

\bibitem{Fer} X. Fernique:  {\sl Processus lin\'{e}aires, processus g\'{e}n\'{e}ralis\'{e}s}, Ann. Inst. Fourier, {\bf 17}(1967), no. 1, 1-92.

\bibitem{GeVi2} I.M. Gelfand, N.Ya. Vilenkin: {\sl  Generalized Functions}, vol. 4, Academic Press, New York, 1964.

%\bibitem{Gil} P,B. Gilkey: {\sl Invariant theory, the Heat Equation and the Atiyah-Singer Index Theorem}, 2nd Edition,  CRC Press, 1995.

\bibitem{Gross} L. Gross: {\sl Abstract Wiener measure and infinite dimensional potential theory},  in {\sl Lectures  in Modern Analysis and Applications II}, p. 84-116,  Lecture Notes  in Math., vol. 140, Springer  Verlag, 1970.

\bibitem{GS} V. Guillemin,  S, Sternberg: {\sl Geometric Asymptotics},  Amer.  Math, Soc. 1990.

\bibitem{HL}  F.r. Harvey, H.B. Lawson: {\sl  A Theory of Characteristic Currents Associated  with a Singular Connection},  Ast\'{e}risque, vol. 213,  Soc. Math. de France,  1993.


\bibitem{Hsu} E.P. Hsu: {\sl Stochastic local Gauss-Bonnet-Theorem}, J. Theor. Prob. {\bf 10}(1997), 819-834.


%\bibitem{Kus} S. Kusuoka: {\sl  Analysis on Wiener spaces. I. Nonlinear maps}, J. Funct. Anal., {\bf 98}(1991), 122-168.

%\bibitem{Kl} A. Klenke: {\sl Probability  Theory. A Comprehensive Course}.  Springer Verlag, 2006.

%\bibitem{Me} M. L. Mehta: {\sl  Random Matrices}, 3rd Edition,  Elsevier, 2004.

\bibitem{Miln} J.W. Milnor: {\sl  Topology  from  the Differentiable Viewpoint}, Princeton Landmarks in Mathematics,  Princeton University Press, 1997.

\bibitem{Min} R. A. Minlos: {\sl Generalized random processes and their extension to a measure}, Trudy Moskov. Mat. Obshch. {\bf 8}(1959), 497-518 (in Russian), Engl. translation in Selected Trans. Math. Stat. and Prob. {\bf 3}(1962), 291-393.

%\bibitem{Mu}  C. M\"{u}ller: {\sl  Analysis of Spherical Symmetries in Euclidean Spaces},   Appl. Math. Sci. vol. 129, Springer Verlag, 1998.

%

\bibitem{NR1}  M.S. Narasimhan,  S. Ramanan: {\sl Existence of universal connections}, Amer. J. Math., {\bf 83}(1961), 563-572.

%\bibitem{NR2}  M.S. Narasimhan,  S. Ramanan: {\sl Existence of universal connections II.}, Amer. J. Math., {\bf 85}(1963), 223-231.

 \bibitem{N1} L.I. Nicolaescu: {\sl Lectures on the Geometry of Manifolds} 2nd Edition, World Scientific, 2007.
 
% \bibitem{N2} L.I. Nicolaescu: {\sl An Invitation to Morse Theory}, Springer Verlag, 2nd Edition 2011.
 
 \bibitem{Ncoarea} L.I. Nicolaescu: \href{http://www3.nd.edu/~lnicolae/Coarea.pdf}{\sl The coarea formula}, preprint.  \url{http://www3.nd.edu/~lnicolae/Coarea.pdf}
 
 \bibitem{N4} L.I. Nicolaescu: \href{http://www3.nd.edu/~lnicolae/RandCrVal_p2.pdf}{\sl  Random Morse functions and spectral geometry}, \href{http://front.math.ucdavis.edu/1209.0639}{\textsf{arXiv: 1209.0639}}
 
 \bibitem{NS} L .I. Nicolaescu, N. Savale: \href{http://www3.nd.edu/~lnicolae/GBprob.pdf}{\sl The Gauss-Bonnet-Chern theorem: a probabilistic perspective}, \href{http://front.math.ucdavis.edu/1404.5206}{\textsf{arXiv: 1404.5206}}
 
 
 
% \bibitem{SF} O.G. Smolyanov, S,V. Fomin: {\sl  Measures on linear topological spaces}, Russian Math. Surverys,  vol. 311, no. 4 (176), 1-53.  
 
 



%\bibitem{N2} L.I. Nicolaescu: \href{http://arxiv.org/abs/1101.5990}{\sl Critical sets of random smooth functions on compact manifolds.},  \textsf{arXiv: 1101.5990}

 %\bibitem{N21} L.I. Nicolaescu: \href{http://front.math.ucdavis.edu/1201.4972}{\sl Complexity of random smooth functions con compact manifolds}, \textsf{arXiv: 1201.4972}, to appear in Indiana Univ. Math. J.

%\bibitem{N3} L.I. Nicolaescu: \href{http://www.nd.edu/~lnicolae/Wave.pdf}{\sl The wave group and the spectral geometry of compact manifolds}, \newline \url{http://www.nd.edu/~lnicolae/Wave.pdf}





%\bibitem{Riesz} M. Riesz: {\sl  L'int\'{e}grale de Riemann-Liouville et le probl\`{e}me de Cauchy},  Acta Math. {\bf 81}(1949), 1-223.

%\bibitem{Pet} S. Peters: {\sl Convergence of Riemannian manifolds}, Compositio Math., {\bf 62}(1987), 3-16.

%\bibitem{Roe} J. Roe: {\sl Elliptic operators, topology and asymptotic methods}, 2nd Edition,  CRC Press, 2001.

% \bibitem{SZ1} B. Shifmann, S. Zelditch: {\sl  Distributions of zeroes of random and quantum chaotic sections of positive line bundles}, Comm. Math. Phys. {\bf 200}(1999), 661-683, \href{http://front.math.ucdavis.edu/9803.5052}{\textsf{arXiv: 9803.5052}}.
 
% \bibitem{SZ2} B. Shifmann, S. Zelditch: {\sl Asymptotics of almost holomorphic sections of ample line bundles on symplectic manifolds}, J. Reine Angew. Math. {\bf 544}(2002), 181-222, \href{http://front.math.ucdavis.edu/0212.5180}{\textsf{arXiv: 0212.5180}}.
 
% \bibitem{Spi5} M. Spivak: {\sl A Comprehensive Introduction to Differential Geometry. Volume 5},  Publish or Perish, 1999.


%\bibitem{SV}  Yu. Safarov, D. Vassiliev: {\sl The Asymptotic Distribution of Eigenvalues of Partial Differential Operators}, Translations of Math. Monographs, vol. 155, Amer. Math. Soc., 1997.

\bibitem{Sch} L. Schwartz: {\sl Radon measures on  Arbitrary Topological Spaces and Cylindrical Measures},  Tata Inst. Fund. Research Stud. in Math., Oxford University Press, 1973.


%\bibitem{Tay} M. Taylor: {\sl  Pseudodifferential Operators},  Princeton University Press,  1981.

%\bibitem{Tay2}M. Taylor: {\sl Notes for a PDE course}, \url{http://www.unc.edu/math/Faculty/met/pde.html} 

%\bibitem{UZ}  A.S. \"{U}stunel, M. Zakai: {\sl The Sard inequality on Wiener space},  J. Funct. Anal. {\bf 149}(1997), 226-244.

%\bibitem{Var} S.R.S. Varadhan: {\sl Probability Theory}, Courant Lect. Notes in Math., vol. 7, Amer. Math. Soc.,  2001.

%\bibitem{WaZhu} X. Wang, K. Zhu: {\sl Isometric embeddings via heat kernel}, \href{http://arxiv.org/pdf/1305.5613.pdf}{\textsf{arXiv: 1305.5613}}.

%\bibitem{Zel} S. Zelditch: {\sl   Real and complex zeros of Riemannian  random waves}, Spectral analysis in geometry and number theory, 321Ð342, Contemp. Math., 484, Amer. Math. Soc., Providence, RI, 2009. \textsf{arXiv:0803.433v1}

%\bibitem{Weinv}  H. Weyl: \emph{The Classical Groups. Their Invariants and Representations},  Princeton University Press, 1939.

% \bibitem{Wetube}  H. Weyl: \emph{On the volume of tubes}, Amer. J.  Math., {\bf 61}(1939), 461-472.
 
 %\bibitem{Z} S. Zelditch: {\sl Szeg\"{o} kernels and a theorem of Tian}, Int. Math. Res. Notices {\bf 6}(1998), 317-331.
 
 \bibitem{Zhao} H. Zhao: {\sl A note for the Gauss-Bonnet-Chern theorem for general connection}, \href{http://arxiv.org/abs/1408.3171}{\textsf{arXiv: 1408.3171}}
\end{thebibliography}
\end{document}